\newtheorem{thm}{Theorem}[section]
\newtheorem{cor}[thm]{Corollary}
\newtheorem{lem}[thm]{Lemma}
\newtheorem{prop}[thm]{Proposition}
\theoremstyle{definition}
\newtheorem{defin}[thm]{Definition}
\numberwithin{equation}{section}
\newcommand{\R}{\mathbb{R}}
\newcommand{\eps}{\varepsilon}
\begin{document}


\baselineskip=17pt


\title{Subdifferential representation of convex functions on $X^*$}

\author{Duanxu Dai\\
College of Mathematics and Computer Science\\
Quanzhou Normal University\\
Quanzhou 362000, China\\
E-mail: dduanxu@163.com
\and}

\date{}

\maketitle


\renewcommand{\thefootnote}{}

\footnote{2010 \emph{Mathematics Subject Classification}: Primary 52A41, 26E25; Secondary 46B20.}

\footnote{\emph{Key words and phrases}: Convex function, $\eps$-subdifferential, Radon-Nikodym property, Banach
space.}

\footnote{ Supported by the Natural Science Foundation of China (Grant No. 11601264) and the Outstanding Youth Scientific Research Personnel Training Program of Fujian Province and the Research Foundation of Quanzhou Normal University(Grant No. 2016YYKJ12) and the High level Talents Innovation and Entrepreneurship Project of Quanzhou City.}
\footnote{}

\renewcommand{\thefootnote}{\arabic{footnote}}
\setcounter{footnote}{0}


\begin{abstract}
 In this paper,  we obtain subdifferential representation of a proper $w^*$-lower semicontinous convex function on $X^*$ as follows:
Let $g$ be a proper convex $w^*$-lower semicontinuous function
on $X^*$. Assume that int dom $g$ $\neq\emptyset$ (resp. int (dom ($g^*|_X)$)$\neq\emptyset$). Then given any point $x_0^*$ $\in$ D ($\partial g\cap X$)
and $x^*$ $\in$ dom $g$ (resp. $x^*\in X^*$), we have
$$g(x^*)=g(x_0^*)+\sup\{\sum_{i=0}^{n-1}\langle x_i,x_{i+1}^*-x_i^*\rangle +\langle x_n,x^*-x_n^*\rangle \},$$
where the above supremum is taken over all integers $n$, all $x_i^*\in X^*$ and all $x_i\in\partial g(x_i^*)\cap X$ for $i=0,1,\cdots,n$.
(resp. if, moreover, $X^*$ has the Radon-Nikodym property, then we may estimate the above supremum among the set of $w^*$-strongly exposed points of $g$.)
\end{abstract}

\section{Introduction}

Let $X$ be a Banach space and $X^*$ be the dual of $X$. Let $g:X\rightarrow R\cup\{+\infty\}$ be a proper lower semicontinuous convex
function. Rockafellar \cite{Roc} has shown that $g$ can be represented through its subdifferential $\partial g$ as follows:
\begin{align}\label{E1}
g(x)&= g(x_0)+\sup\{\sum_{i=0}^{n-1}\langle x_i^*,x_{i+1}-x_i\rangle +\langle x_n^*,x-x_n\rangle \}
\end{align}
for every $x\in X$, where the above supremum is taken over all integers $n$, all $x_i\in X$ and all $x_i^*\in\partial g(x_i)$ for $i=0,1,\cdots,n$.

J. Benoist and A. Daniilidis \cite{Ben} show that, in Banach spaces with the Radon-Nikodym property, and
under the assumption that int (dom ($g^*$))$\neq\emptyset$ , the above formula (\ref{E1}) can be simplified.
Indeed, it suffices to estimate the above supremum among the set of strongly exposed
points of $g$, instead of the domain of $\partial g$.

In this paper, we consider the case where $g$ is a proper $w^*$-lower semicontinous convex function defined on a dual Banach space $X^*$. Thus, we show that if int dom $g$ $\neq$ $\emptyset$ (resp. int (dom $(g^*|_X)$) $\neq$ $\emptyset$), then given any point $x_0^*$ $\in$ D ($\partial g\cap X$)
and $x^*$ $\in$ dom $g$ (resp. $x^*\in X^*$), we have
\begin{align}\label{E2}
g(x^*)&=g(x_0^*)+\sup\{\sum_{i=0}^{n-1}\langle x_i,x_{i+1}^*-x_i^*\rangle +\langle x_n,x^*-x_n^*\rangle \},
\end{align}
where the above supremum is taken over all integers $n$, all $x_i^*\in X^*$ and all $x_i\in\partial g(x_i^*)\cap X$.
(resp. if, moreover, $X^*$ has the Radon-Nikodym property, then we may estimate the above supremum among the set of all $w^*$-strongly exposed points of $g$, instead of the much larger set of all strongly exposed points of $g$.)

\section{Preliminaries}

We list the following classical definitions (See, for
example \cite{Phe}).

\begin{defin}Let $f:X\rightarrow R\cup\{\infty\}$. The
effective domain of $f$ is the set dom ($f$) $=$ $\{x\in X:
f(x)<+\infty\}$. We say that $f$ is lower semicontinuous (l.s.c.)
provided $\{x\in X: f(x)\leq r\}$ is closed in $X$ for every $r\in
R$, i.e., the epigraph of $f:$
 $$\{(x,r)\in X\times R: r\geq f(x)\}$$
 is closed in $X\times R$. Equivalently, $f$ is l.s.c. provided
  $$f(x)\leq \liminf f(x_\delta)$$
  whenever $x\in X$ and $(x_\delta)$ is a net in $X$ converging to
  $x$.

  We say $f$ is proper if both $f(x)\neq -\infty$ for all $x\in X$ and dom ($f$) $\neq$ $\emptyset$.
\end{defin}

\begin{defin}Let $f$ be a proper function from $X$ into $(-\infty, +\infty]$.
The Fenchel conjugate $f^*$ of $f$ is defined by
$$f^*(x)=\sup\{\langle x^*, x\rangle-f(x):x\in X\},\;\rm{for\;all}\;x^*\in X^*.$$
then $f^*$ is a $w*-l.s.c$ function from $X^*$ into $(-\infty,
+\infty]$ and may be not proper.
\end{defin}

\begin{defin}Let $f:X\rightarrow(-\infty, +\infty]$ be a proper convex and
l.s.c-function. If $x$ $\in$ dom ($f$), define the subdifferential
$\partial f(x)$ by

$$\partial f(x)=\{x^*\in X^*:\langle x^*,y-x\rangle\leq f(y)-f(x)\;\rm{for\;all}\;y\in X\}$$
$$\;\;\;\;\;\;\;\;\;\;\;\;=\{x^*\in X^*:\langle x^*,y\rangle\leq f(x+y)-f(x)\;\rm{for\;all}\;y\in X\}.\;\;$$
and $x^*\in\partial f(x)$ if and only if $x^*(y)\leq d^{+}f(x)(y)$
for all $y\in X$. This set may be empty.
\end{defin}

\begin{defin}Let $f:X\rightarrow(-\infty, +\infty]$ be a proper convex and l.s.c.
function. Suppose $x$ $\in$ dom ($f$). For any $\eps>0$, define the
$\eps$-subdifferential $\partial_{\eps} f(x)$ by

$$\partial_{\eps}f(x)=\{x^*\in X^*:\langle x^*,y-x\rangle\leq f(y)-f(x)+\eps\;\rm{for\;all}\;y\in X\}$$
and $x^*\in\partial_\eps f(x)$ if and only if $x^*(y)\leq
d^{+}f(x)(y)+\eps$ for all $y\in X$. This set is always nonempty and
$w^*$-closed.
\end{defin}

We list in the following statement some easy facts\cite{Fab} about
conjugate functions. The proof is standard.

\begin{prop}\label{P1}Let $f$ and $g$ be two proper functions from $X$ into $(-\infty, +\infty]$.
Then, given $x\in X$ and $x^*\in X^*$,

$i)\;f(x)+f^*(x^*)\geq x^*(x).$

$ii)\;f^*\; is\; proper,\; then \;f^{**}|_X\leq f.$

$iii)\;f\leq g,\; then \;f^{*}\geq g^*.$

\end{prop}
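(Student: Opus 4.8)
The plan is to prove that the right-hand side, regarded as a function of $x^*$, coincides with $g$. Write
$$h(x^*)=g(x_0^*)+\sup_C\Big(\sum_{i=0}^{n-1}\langle x_i,x_{i+1}^*-x_i^*\rangle+\langle x_n,x^*-x_n^*\rangle\Big),$$
the supremum running over all finite chains $C=(x_0^*,x_0),\dots,(x_n^*,x_n)$ with fixed first node $x_0^*$ and $x_i\in\partial g(x_i^*)\cap X$. First I would record the \emph{easy inequality} $h\le g$: since each $x_i\in X\subseteq X^{**}$ is a subgradient of $g$ at $x_i^*$, the inequalities $\langle x_i,x_{i+1}^*-x_i^*\rangle\le g(x_{i+1}^*)-g(x_i^*)$ and $\langle x_n,x^*-x_n^*\rangle\le g(x^*)-g(x_n^*)$ telescope to bound the bracket by $g(x^*)-g(x_0^*)$; taking $n=0$ at $x^*=x_0^*$ gives $h(x_0^*)=g(x_0^*)$. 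Since every summand is, as a function of the target $x^*$, the $w^*$-continuous affine map $x^*\mapsto\langle x_n,x^*\rangle+\mathrm{const}$, the function $h$ is a supremum of $w^*$-continuous affine functions, hence proper convex and $w^*$-lower semicontinuous, with $h\le g$ and $h(x_0^*)=g(x_0^*)$.

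Next I would establish the \emph{subgradient inclusion} $\partial g(x^*)\cap X\subseteq\partial h(x^*)$. Given $x\in\partial g(x^*)\cap X$ and any chain $C=(x_0^*,x_0),\dots,(x_n^*,x_n)$, form the longer admissible chain $C'=(x_0^*,x_0),\dots,(x_n^*,x_n),(x^*,x)$; the value of $C'$ evaluated at an arbitrary target $y^*$ equals the value of $C$ at target $x^*$ plus $\langle x,y^*-x^*\rangle$. Taking the supremum over $C$ yields $h(y^*)\ge h(x^*)+\langle x,y^*-x^*\rangle$ for every $y^*$, that is $x\in\partial h(x^*)$.

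Now I would pass to the \emph{predual conjugate} $f:=g^*|_X$ on $X$. Because $g$ is proper convex and $w^*$-lower semicontinuous, the Fenchel--Moreau theorem for the $w^*$-topology gives $g=f^*$ with $f$ proper lsc convex, and the Young equality characterisation of subgradients (the equality case of Proposition~\ref{P1}(i)) shows $x\in\partial g(x^*)\cap X\iff x^*\in\partial f(x)$; applied to the proper $w^*$-lsc convex function $h$ it gives likewise $x\in\partial h(x^*)\cap X\iff x^*\in\partial(h^*|_X)(x)$. Hence the inclusion of the previous step reads $\partial f\subseteq\partial(h^*|_X)$ as monotone operators on $X$. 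Rockafellar's theorem makes $\partial f$ \emph{maximal} (cyclically) monotone, so this inclusion is an equality; consequently, applying formula (\ref{E1}) to $f$ and to $h^*|_X$ from a common base point $x_0\in D(\partial f)$, the two chain-suprema coincide and $f-f(x_0)=h^*|_X-(h^*|_X)(x_0)$, so $h^*|_X=f+c$ and therefore $h=f^*-c=g-c$. Evaluating at $x_0^*$ forces $c=0$, whence $h=g$ on all of $X^*$; in particular (\ref{E2}) holds for every $x^*\in\mathrm{dom}\,g$ (and, in the parenthetical regime, for every $x^*\in X^*$). The domain assumptions $\mathrm{int\,dom}\,g\neq\emptyset$ and $\mathrm{int\,dom}(g^*|_X)\neq\emptyset$ enter only to guarantee that $D(\partial g\cap X)=R(\partial f)$ is nonempty and rich enough for admissible base points $x_0^*$ and chains to exist.

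For the parenthetical refinement I would keep the same duality, now with $\mathrm{int\,dom}\,f\neq\emptyset$ and $X^*$ enjoying the Radon--Nikodym property, and reduce the admissible chains to $w^*$-strongly exposed points of $g$ exactly as in Benoist--Daniilidis \cite{Ben}, but carried out in the $w^*$-topology: the exposing functionals are drawn from the predual $X$ (so as to be $w^*$-continuous), and the RNP of $X^*$ is used to ensure that the bounded $w^*$-closed convex sets attached to $g$ are the closed convex hulls of their $w^*$-strongly exposed points, so that each subgradient $x_i\in\partial g(x_i^*)\cap X$ appearing in a chain may be replaced, up to arbitrarily small error, by one exposing $g$ at such a point. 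I expect the main obstacle to be precisely this last step: transporting the Benoist--Daniilidis selection of strongly exposed points into the $w^*$-setting, i.e. verifying that $\mathrm{int\,dom}(g^*|_X)\neq\emptyset$ together with the RNP of $X^*$ furnishes enough \emph{$w^*$-}strongly exposed points (exposed by predual functionals) to drive the approximation, since RNP by itself produces strongly exposed points whose exposing functionals need not lie in $X$.
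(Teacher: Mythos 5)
Your proposal does not prove the statement it was asked to prove. The statement under review is Proposition \ref{P1}, three elementary properties of the Fenchel conjugate: the Fenchel--Young inequality, the bound $f^{**}|_X\leq f$ when $f^*$ is proper, and the order reversal $f\leq g\Rightarrow f^*\geq g^*$. All three drop out of the definition $f^*(x^*)=\sup_{y\in X}\{\langle x^*,y\rangle-f(y)\}$ in a few lines, which is why the paper writes only ``The proof is standard'': for (i), taking $y=x$ in the supremum gives $f^*(x^*)\geq \langle x^*,x\rangle-f(x)$, i.e.\ $f(x)+f^*(x^*)\geq x^*(x)$ (trivially true when $f(x)=+\infty$); for (ii), $f^{**}(x)=\sup_{x^*\in X^*}\{\langle x^*,x\rangle-f^*(x^*)\}\leq f(x)$ since each term is bounded by $f(x)$ by (i), properness of $f^*$ ensuring no $\infty-\infty$ ambiguity; for (iii), $f\leq g$ gives $\langle x^*,y\rangle-f(y)\geq\langle x^*,y\rangle-g(y)$ for every $y$, and taking suprema over $y$ preserves the inequality. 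None of this appears in your text.

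What you wrote instead is an argument for the paper's main result, the representation formula (\ref{E2}) of Theorem \ref{T3} --- and it explicitly invokes ``the equality case of Proposition \ref{P1}(i)'' as a known tool, so offered as a proof of Proposition \ref{P1} it is circular as well as off target. As an aside, your route to Theorem \ref{T3} is interesting in its own right and genuinely different from the paper's: you push the graph inclusion $\mathrm{Gr}\,(\partial g\cap X)\subset\mathrm{Gr}\,\partial h$ (the paper's Proposition \ref{P8}) through the predual conjugate $f=g^*|_X$, use maximal monotonicity of $\partial f$ plus Rockafellar's ``equal subdifferentials imply equality up to an additive constant'' to get $h^*|_X=f+c$, and conjugate back via Corollary \ref{Cor1}; the paper instead routes through Theorem \ref{T1} (density of $D(\partial g\cap X)$), minimal $w^*$-cuscos, and the interiority hypotheses in Theorem \ref{T2}. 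But whatever its merits, that argument answers a different question than the one posed, and a proof of Proposition \ref{P1} along the elementary lines above still needs to be supplied.
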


\begin{prop}\label{P2}Let $f: X\rightarrow(-\infty, +\infty]$ be a proper function
such that $f^*$ is also proper. Then

i) $\rm{epi}$ $f^{**}$ $=$ $\overline{\rm{conv}} ^{w^*}$($\rm{epi}$ $f$).

ii) $f^{**}|_X$ $=$ $f$ if and only if $f$ is convex and l.s.c.
\end{prop}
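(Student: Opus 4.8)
The plan is to establish part (i) first, as the epigraphic identity, and then read off part (ii) from it. Throughout, write $C := \overline{\mathrm{conv}}^{\,w^*}(\mathrm{epi}\,f)$, a nonempty ($f$ is proper) $w^*$-closed convex subset of $(X^{**},w^*)\times\R$. The first, easy inclusion $C\subseteq\mathrm{epi}\,f^{**}$ comes from two observations: $f^{**}=(f^*)^*$ is a Fenchel conjugate, hence convex and $\sigma(X^{**},X^*)$-l.s.c., so $\mathrm{epi}\,f^{**}$ is convex and $w^*$-closed; and $f^{**}|_X\le f$ by Proposition \ref{P1}(ii), so $\mathrm{epi}\,f\subseteq\mathrm{epi}\,f^{**}$. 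Taking $w^*$-closed convex hulls gives the inclusion. I would also record here that $f^{**}$ is proper: it is $\le f\not\equiv+\infty$, and picking any $\xi^*\in\mathrm{dom}\,f^*$ (nonempty since $f^*$ is proper) gives $f^{**}\ge\langle\,\cdot\,,\xi^*\rangle-f^*(\xi^*)>-\infty$.

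The substance is the reverse inclusion $\mathrm{epi}\,f^{**}\subseteq C$, proved by contraposition with Hahn--Banach. Given $(x_0,r_0)\notin C$, strict separation in the locally convex space $(X^{**},w^*)\times\R$ — whose topological dual is exactly $X^*\times\R$ — yields $(x^*,\lambda)\in X^*\times\R$ and $\alpha\in\R$ with $\langle x^*,x_0\rangle+\lambda r_0<\alpha\le\langle x^*,x\rangle+\lambda r$ for all $(x,r)\in C$. Since $\mathrm{epi}\,f$ contains rays in the $+r$ direction, $\lambda\ge0$. In the non-vertical case $\lambda>0$ I normalize $\lambda=1$; evaluating the lower bound at $r=f(x)$ gives $f^*(-x^*)\le-\alpha$, whence $f^{**}(x_0)\ge\langle -x^*,x_0\rangle-f^*(-x^*)>r_0$, so $(x_0,r_0)\notin\mathrm{epi}\,f^{**}$.

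The main obstacle, as always in Fenchel--Moreau, is the vertical case $\lambda=0$, where the separating functional carries no information about $f^{**}$. I would resolve it by tilting the vertical hyperplane into a non-vertical one using properness of $f^*$. Fix $\xi^*\in\mathrm{dom}\,f^*$; then the $w^*$-closed half-space $\{(x,r):\langle\xi^*,x\rangle-r\le f^*(\xi^*)\}$ contains $\mathrm{epi}\,f$, hence $C$. For $\mu>0$ consider $L(x,r)=\langle x^*,x\rangle+\mu\bigl(r-\langle\xi^*,x\rangle\bigr)$; on $C$ one has $L\ge\alpha-\mu f^*(\xi^*)$, while $L(x_0,r_0)<\alpha-\mu f^*(\xi^*)$ holds once $\mu\bigl[(r_0-\langle\xi^*,x_0\rangle)+f^*(\xi^*)\bigr]<\alpha-\langle x^*,x_0\rangle$, which is true for all sufficiently small $\mu>0$ because the right side is strictly positive. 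Now $L$ has strictly positive $r$-coefficient $\mu$, so after rescaling it is a non-vertical separating functional and the previous paragraph applies, giving $f^{**}(x_0)>r_0$. This completes (i).

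Finally I deduce (ii) from (i). For the ``if'' direction, assume $f$ is convex and l.s.c.; then $\mathrm{epi}\,f$ is convex and norm-closed, hence $\sigma(X,X^*)$-closed in $X\times\R$, and since the $w^*$-topology of $X^{**}$ restricts to the weak topology on $X$, no point of $X\times\R$ outside $\mathrm{epi}\,f$ lies in $C$; that is, $C\cap(X\times\R)=\mathrm{epi}\,f$. For $x\in X$ this forces $f^{**}(x)=\inf\{r:(x,r)\in C\}=\inf\{r:(x,r)\in\mathrm{epi}\,f\}=f(x)$, so $f^{**}|_X=f$. Conversely, if $f^{**}|_X=f$, then $f$ inherits convexity from $f^{**}$ and is $\sigma(X,X^*)$-l.s.c. (restriction of the $w^*$-l.s.c. function $f^{**}$); since weak lower semicontinuity of a function implies norm lower semicontinuity, $f$ is convex and l.s.c. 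I expect the only delicate point outside the $\lambda=0$ case to be checking that the relevant restriction of the $w^*$-topology to $X$ is genuinely $\sigma(X,X^*)$, which is immediate from the pairing.
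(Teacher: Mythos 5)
Your proposal is correct. Note that the paper itself offers no proof of Proposition \ref{P2}: it is listed among the standard facts on conjugate functions with a citation to the Fabian--Habala--H\'ajek--Montesinos--Zizler book, so there is no in-paper argument to compare against. What you have written is precisely the classical Fenchel--Moreau/bipolar proof one finds in such references: identify the dual of $(X^{**},w^*)\times\R$ as $X^*\times\R$, separate a point from $\overline{\mathrm{conv}}^{\,w^*}(\mathrm{epi}\,f)$, observe $\lambda\ge 0$ from the upward rays, handle $\lambda>0$ directly, and dispose of the vertical case $\lambda=0$ by tilting with a functional $\xi^*\in\mathrm{dom}\,f^*$ --- this is exactly where properness of $f^*$ is needed, and you use it correctly. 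Part (ii) then follows as you say. The only spot worth one more line is the claim in the ``if'' direction that no point of $X\times\R$ outside $\mathrm{epi}\,f$ lies in $C$: this is not a purely topological restriction statement but a second application of Hahn--Banach, since a functional $(x^*,\lambda)\in X^*\times\R$ strictly separating such a point from the weakly closed convex set $\mathrm{epi}\,f$ is $w^*$-continuous on $X^{**}\times\R$, so the half-space it defines contains $C$ and excludes the point; with that made explicit, the proof is complete.
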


\begin{cor}\label{Cor1} Every convex and $w^*$-l.s.c function $g: X^*\rightarrow(-\infty,
+\infty]$ is the Fenchel conjugate of $g^*|_X$.
\end{cor}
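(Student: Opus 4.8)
The plan is to recognize the corollary as an instance of the Fenchel--Moreau biconjugation theorem for the dual pair $\langle X, X^*\rangle$ in which $X^*$ carries its weak$^*$ topology. Write $h := g^{*}|_X : X \to (-\infty,+\infty]$, so that by definition $h(x) = \sup\{\langle x^*, x\rangle - g(x^*) : x^* \in X^*\}$, and the assertion to be proved is precisely that $h^{*} = g$, where $h^{*}(x^*) = \sup\{\langle x^*, x\rangle - h(x) : x \in X\}$. First I would dispose of the degenerate case: if $g \equiv +\infty$ the identity holds directly from the conventions, so from now on $g$ may be assumed proper. Because $g$ is proper, convex and $w^*$-l.s.c., it admits a $w^*$-continuous affine minorant, i.e.\ there are $x_0 \in X$ and $c \in \R$ with $g(x^*) \ge \langle x^*, x_0\rangle - c$ for all $x^* \in X^*$; this forces $h(x_0) \le c < +\infty$, so $h$ is proper as well and every conjugate below is legitimate.

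The easy inequality $h^{*} \le g$ follows at once from the Young--Fenchel inequality of Proposition~\ref{P1}(i) applied to $g$ on $X^*$: for every $x \in X$ and $x^* \in X^*$ one has $g(x^*) + g^{*}|_X(x) \ge \langle x^*, x\rangle$, that is $g(x^*) \ge \langle x^*, x\rangle - h(x)$; taking the supremum over $x \in X$ gives $g(x^*) \ge h^{*}(x^*)$.

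The substance of the proof is the reverse inequality $g \le h^{*}$, and here the weak$^*$-lower semicontinuity is essential. Since $g$ is convex and $w^*$-l.s.c., its epigraph $E := \{(x^*,t) : t \ge g(x^*)\}$ is a convex, $w^*$-closed subset of $X^* \times \R$. The topological dual of $(X^*, w^*) \times \R$ is exactly $X \times \R$, so by the Hahn--Banach separation theorem $E$ is the intersection of the $w^*$-closed half-spaces containing it, and each such half-space is cut out by a functional coming from $X \times \R$. Fixing $x^*$ and any real $r < g(x^*)$, the point $(x^*, r)$ lies outside $E$ and can therefore be strictly separated from $E$ by some $(x,\beta) \in X \times \R$. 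When the separating hyperplane is non-vertical one normalizes $\beta = -1$ and reads off $r < \langle x^*, x\rangle - h(x) \le h^{*}(x^*)$; letting $r \uparrow g(x^*)$ yields $g(x^*) \le h^{*}(x^*)$, which combined with the previous step gives $g = h^{*}$.

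I expect the main obstacle to be the vertical-hyperplane case, namely a separating functional with $\beta = 0$, which occurs exactly at points $x^*$ on the boundary of, or outside, $\operatorname{dom} g$. There the separation only produces $\langle x^*, x\rangle > \langle y^*, x\rangle$ for all $y^* \in \operatorname{dom} g$ and gives no direct control of $h^{*}$. The standard remedy is to tilt the vertical hyperplane by adding a small multiple of the genuine affine minorant $y^* \mapsto \langle y^*, x_0\rangle - c$ produced in the first paragraph; for a small enough perturbation parameter the combined functional is non-vertical and still separates $(x^*,r)$ from $E$, reducing this case to the previous one. Carrying out this perturbation carefully, and verifying throughout that every supporting functional may be taken in $X$ rather than in $X^{**}$ --- which is the precise point where $w^*$-closedness of $E$, rather than mere norm-closedness, is used --- completes the argument.
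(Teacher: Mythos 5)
Your proposal is correct, but it takes a different route from the paper, which in fact offers no proof at all: the corollary is stated as an immediate consequence of the standard conjugacy facts listed just before it (Propositions \ref{P1} and \ref{P2}, cited from the literature), i.e.\ of the biconjugation theorem $\mathrm{epi}\, f^{**}=\overline{\mathrm{conv}}^{w^*}(\mathrm{epi}\, f)$ transposed to the dual pairing $\langle X^*,X\rangle$. What you do instead is re-prove that underlying result --- the Fenchel--Moreau theorem for the pair $(X^*,\sigma(X^*,X))$ --- from scratch: the Young--Fenchel inequality gives $(g^*|_X)^*\leq g$, and Hahn--Banach separation in the locally convex space $(X^*,w^*)\times\R$, whose topological dual is $X\times\R$, gives the reverse inequality, with the vertical-hyperplane case handled by tilting with a $w^*$-continuous affine minorant. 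This is the standard proof of exactly the fact the paper invokes, so the two approaches agree in substance; yours has the merit of being self-contained, of treating the improper case $g\equiv+\infty$ and the properness of $g^*|_X$ explicitly (points the paper's citation glosses over), and of isolating precisely where $w^*$-lower semicontinuity, as opposed to norm lower semicontinuity, is used --- namely that the separating functionals live in $X$ rather than $X^{**}$. One small imprecision: a vertical strictly separating hyperplane can occur only when $x^*\notin\mathrm{dom}\,g$; if $x^*\in\mathrm{dom}\,g$ (even on its boundary), evaluating the separating functional at $(x^*,g(x^*))\in\mathrm{epi}\,g$ forces the vertical component to be strictly negative. This does not damage your argument, which simply handles whichever case arises, but the phrase ``occurs exactly at points on the boundary of, or outside, $\mathrm{dom}\,g$'' should be corrected accordingly.
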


\begin{prop}\label{P3}Let $f: X\rightarrow(-\infty, +\infty]$ be a proper
function. Let $x\in X$, $x^*\in X^*$ and $\eps>0$.Then

i) $x^*\in \partial_{\eps}
f(x)$ if and only if $f(x)+f^*(x^*)\leq x^*(x)+\eps$.

ii) $x^*\in \partial f(x)$ if and only if $f(x)+f^*(x)= x^*(x)$.
\end{prop}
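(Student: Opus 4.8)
The plan is to read off both characterizations directly from the definition of the Fenchel conjugate, invoking the Young--Fenchel inequality (Proposition~\ref{P1}(i)) only to upgrade the inequality in part~(ii) to an equality. Throughout I take $x\in\mathrm{dom}(f)$, as is implicit in the definitions of $\partial_\eps f(x)$ and $\partial f(x)$; then $f(x)$ is finite and every rearrangement below is legitimate.

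For part~(i) I would start from the defining condition $x^*\in\partial_\eps f(x)$, namely
$$\langle x^*,y-x\rangle\le f(y)-f(x)+\eps\quad\text{for all }y\in X.$$
Separating the $y$-dependent terms from the data fixed at $x$, this is the same family of inequalities as
$$\langle x^*,y\rangle-f(y)\le\langle x^*,x\rangle-f(x)+\eps\quad\text{for all }y\in X.$$
The right-hand side is independent of $y$, so the whole family holds if and only if its left-hand side is bounded above by that constant, i.e.\ if and only if its supremum over $y$ is. By definition that supremum is exactly $f^*(x^*)$, so the condition is equivalent to
$$f^*(x^*)\le\langle x^*,x\rangle-f(x)+\eps,$$
which is precisely $f(x)+f^*(x^*)\le x^*(x)+\eps$. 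Both implications are contained in this chain of equivalences, the only point meriting a word being that passing to the supremum is reversible: every individual inequality is implied by the supremum inequality, and conversely the supremum inequality is the join of the individual ones.

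For part~(ii) I would use that $\partial f(x)=\bigcap_{\eps>0}\partial_\eps f(x)$, which follows by letting $\eps\downarrow0$ in the defining inequalities. Combined with part~(i) this gives
$$x^*\in\partial f(x)\iff f(x)+f^*(x^*)\le x^*(x)+\eps\text{ for every }\eps>0\iff f(x)+f^*(x^*)\le x^*(x).$$
Proposition~\ref{P1}(i) supplies the reverse inequality $f(x)+f^*(x^*)\ge x^*(x)$ without any hypothesis, so the two together force the asserted equality $f(x)+f^*(x^*)=x^*(x)$. (The displayed statement of~(ii) writes $f^*(x)$ in place of $f^*(x^*)$; I read this as a typographical slip and prove the intended identity.)

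I do not expect a genuine obstacle here, since both parts are mere reformulations of the supremum defining $f^*$. The only care required is bookkeeping: keeping $f(x)$ finite so terms may legitimately cross the inequality, and observing that the degenerate case $f^*(x^*)=+\infty$ is automatically consistent, since there the conjugate inequality fails for every $\eps>0$, matching the impossibility of any $\eps$-subgradient relation. Notably neither convexity nor lower semicontinuity of $f$ is used; the two characterizations hold for arbitrary proper $f$ and are consequences of the definitions alone.
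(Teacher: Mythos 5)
Your proof is correct, and since the paper states Proposition~\ref{P3} without proof (it is listed among the standard facts about conjugate functions, with the proofs deferred to the cited references), your argument --- rearranging the defining inequality of $\partial_\eps f(x)$ so that the supremum over $y$ produces exactly $f^*(x^*)$, then obtaining (ii) by intersecting over $\eps>0$ and invoking the Young--Fenchel inequality of Proposition~\ref{P1}(i) to upgrade to equality --- is precisely the standard derivation the paper has in mind. Your reading of $f^*(x)$ in part (ii) as a typographical slip for $f^*(x^*)$, and your observation that neither convexity nor lower semicontinuity is needed, are likewise correct.
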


\begin{cor}\label{Cor2} Let $f: X\rightarrow(-\infty, +\infty]$ be a proper
function. Let $x\in X$, $x^*\in X^*$ and $\eps\geq0$.Then

i) if $x^*\in \partial_{\eps} f(x)$, then $x\in\partial_{\eps}
f^*(x^*)$.

ii) if $f$ is moverover, convex and l.s.c., and $x\in\partial_{\eps}
f^*(x^*)$, then $x^*\in \partial_{\eps} f(x)$.

\end{cor}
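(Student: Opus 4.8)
The plan is to reduce both assertions to the conjugate characterization of the $\eps$-subdifferential given by Proposition~\ref{P3}(i), and then let the distinction between the inequality $f^{**}|_X\le f$ and the equality $f^{**}|_X=f$ carry the entire argument. First I would record two equivalences. Applying Proposition~\ref{P3}(i) to $f$ on $X$ at the point $x$ with functional $x^*$ gives
$$x^*\in\partial_\eps f(x)\iff f(x)+f^*(x^*)\le\langle x^*,x\rangle+\eps,$$
while applying the same proposition to $f^*$ on $X^*$ at the point $x^*$ with the functional $x\in X\subseteq X^{**}$ gives
$$x\in\partial_\eps f^*(x^*)\iff f^*(x^*)+f^{**}(x)\le\langle x^*,x\rangle+\eps,$$
where $f^{**}=(f^*)^*$ is the conjugate of $f^*$ viewed on $X^{**}$ and $x$ acts through the canonical embedding. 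Before using the second equivalence I would check that $f^*$ is proper: since $f$ is proper, $f^*>-\infty$ everywhere, and in each part the hypothesis forces $f^*(x^*)<+\infty$, so $\mathrm{dom}(f^*)\neq\emptyset$.

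For part (i) I would simply chain these statements. The hypothesis $x^*\in\partial_\eps f(x)$ yields $f(x)+f^*(x^*)\le\langle x^*,x\rangle+\eps$. By Proposition~\ref{P1}(ii) we have $f^{**}|_X\le f$, hence $f^{**}(x)\le f(x)$, and therefore
$$f^*(x^*)+f^{**}(x)\le f^*(x^*)+f(x)\le\langle x^*,x\rangle+\eps,$$
which by the second equivalence is exactly $x\in\partial_\eps f^*(x^*)$. No convexity or lower semicontinuity of $f$ is used, matching the hypotheses of part (i).

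For part (ii) the extra assumption that $f$ is convex and l.s.c.\ is precisely what upgrades the above inequality to an equality: Proposition~\ref{P2}(ii) gives $f^{**}|_X=f$, so $f^{**}(x)=f(x)$. Now the left-hand sides of the two equivalences coincide, and the hypothesis $x\in\partial_\eps f^*(x^*)$ reads $f^*(x^*)+f^{**}(x)\le\langle x^*,x\rangle+\eps$; substituting $f^{**}(x)=f(x)$ turns this into $f(x)+f^*(x^*)\le\langle x^*,x\rangle+\eps$, which by the first equivalence means $x^*\in\partial_\eps f(x)$.

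I expect the main point to be bookkeeping rather than genuine difficulty: one must read $\partial_\eps f^*(x^*)$ as a subset of the bidual, interpret $x\in X$ through the embedding into $X^{**}$, and confirm that invoking Proposition~\ref{P3}(i) for $f^*$ is legitimate (properness of $f^*$ together with the identification $(f^*)^*=f^{**}$). Once these identifications are fixed, the entire asymmetry between (i) and (ii) is accounted for by the asymmetry between $f^{**}|_X\le f$, which always holds by Proposition~\ref{P1}(ii), and $f^{**}|_X=f$, which holds exactly when $f$ is convex and l.s.c.\ by Proposition~\ref{P2}(ii).
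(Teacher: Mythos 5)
Your proof is correct and follows exactly the route the paper intends: the paper states Corollary~\ref{Cor2} without proof as an immediate consequence of Proposition~\ref{P3}(i) together with the biconjugate facts, and your argument --- using $f^{**}|_X\le f$ (Proposition~\ref{P1}(ii)) for part (i) and $f^{**}|_X=f$ (Proposition~\ref{P2}(ii)) for part (ii), with due care about properness of $f^*$ and the canonical embedding $X\subseteq X^{**}$ --- is precisely that standard derivation.
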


J. Borwein \cite{Borw} prove a theorem which provides some formulae and extends the Bishop-Phelps-Bronsted-Rockafellar theorem.

In this section, following the techniques of J. Borwein \cite{Borw} we will give a dual version of J. Borwein's theorem which will be applied to subdifferential representation of a proper $w^*$-lower semicontinous convex function on $X^*$.

Through the similar reasoning, we have the following dual version of Proposition 3.15 in \cite{Phe}.
\begin{prop}\label{P6}If $g$ is a proper convex $w^*$-lower semicontinuous function
on $X^*$ then $\partial_{\eps} g(x^*)\cap X\neq\emptyset$, for every
$x^*$ $\in$ $\rm{dom}$ ($g$) and every $\eps>0$.
\end{prop}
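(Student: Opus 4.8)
The plan is to adapt the classical argument for Proposition~3.15 of \cite{Phe}, but to run the separation in the dual system $(X^*, X)$ rather than in $(X, X^*)$, so that the separating functional is automatically an element of $X$ and not merely of $X^{**}$. The feature that makes this possible is that $g$ is $w^*$-lower semicontinuous: its epigraph
$$\mathrm{epi}\,g = \{(y^*, r) \in X^* \times \R : r \geq g(y^*)\}$$
is then convex and closed for the product topology $\tau := w^* \times |\cdot|$ on $X^* \times \R$. The key point I will use is that $(X^*, w^*)$ has topological dual $X$, so the dual of the locally convex space $(X^* \times \R, \tau)$ is exactly $X \times \R$; that is, every $\tau$-continuous linear functional is represented by a pair $(x, \lambda) \in X \times \R$.

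First I would fix $x^* \in \mathrm{dom}(g)$ and $\eps > 0$, and note that the point $p := (x^*, g(x^*) - \eps)$ does not belong to the (nonempty, by properness) closed convex set $\mathrm{epi}\,g$. Applying the Hahn--Banach separation theorem to $p$ and $\mathrm{epi}\,g$ in $(X^* \times \R, \tau)$ produces $(x,\lambda) \in X \times \R$ with
$$\inf_{(y^*,r)\in \mathrm{epi}\,g}\big[\langle x, y^*\rangle + \lambda r\big] > \langle x, x^*\rangle + \lambda\big(g(x^*)-\eps\big).$$
Letting $r \to +\infty$ on the left-hand side forces $\lambda \geq 0$, and testing the inequality at the admissible point $(x^*, g(x^*)) \in \mathrm{epi}\,g$ yields $\lambda\eps > 0$, hence $\lambda > 0$. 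This last step is exactly where the hypothesis $x^* \in \mathrm{dom}(g)$ enters: it guarantees that $(x^*, g(x^*))$ lies in the epigraph, which rules out a vertical separating hyperplane.

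Next I would normalize by setting $u := x/\lambda \in X$ and evaluate the separation inequality at $r = g(y^*)$ for arbitrary $y^* \in \mathrm{dom}(g)$; rearranging gives
$$\langle -u, y^* - x^*\rangle < g(y^*) - g(x^*) + \eps \qquad (y^* \in \mathrm{dom}\,g),$$
while the inequality is trivial when $g(y^*) = +\infty$. By the definition of the $\eps$-subdifferential this says $-u \in \partial_\eps g(x^*)$, and since $-u \in X$ we conclude $\partial_\eps g(x^*) \cap X \neq \emptyset$. I expect the only genuinely delicate point to be the identification of the dual of $(X^*, w^*)$ with $X$, i.e. the verification that the separating functional can be taken $w^*$-continuous; this is precisely what the $w^*$-lower semicontinuity of $g$ buys us, and it is what distinguishes this dual statement from the primal Proposition~3.15, where the $\eps$-subgradient would a priori only lie in $X^{**}$.
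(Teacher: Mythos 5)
Your proof is correct and is essentially the argument the paper intends: the paper justifies Proposition \ref{P6} only by appealing to ``the similar reasoning'' as Proposition 3.15 of \cite{Phe}, which is exactly the separation-of-the-epigraph argument you carry out, with the one genuinely new ingredient --- that the dual of $(X^*\times\R,\ w^*\times|\cdot|)$ is $X\times\R$, so the separating functional automatically lies in $X$ rather than $X^{**}$ --- correctly identified and exploited. The non-vertical-hyperplane step (testing at $(x^*,g(x^*))$ to get $\lambda>0$) and the final rearrangement are both sound, so there are no gaps.
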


Now, we have the following dual version of J.Borwein's theorem.
\begin{thm}\label{T1} Assume that $g$ is a proper convex $w^*$-lower semicontinuous function
on $X^*$, and suppose that $\varepsilon>0$, $\beta\geq0$ and
$x_0^*$ $\in$ $\rm{dom}$ ($g$). Suppose, further, that $x_0\in\partial_\eps
g(x_0^*)\cap X$. Then there exist points $x_\eps^*$ $\in$ $\rm{dom}$ ($g$) and
$x_\eps\in X$ such that

$i)\;x_{\eps}\in \partial g(x_{\eps}^*)\cap
X;\;\;\;\;ii)\;\|x_{\eps}^*-x_0^*\|
\leq\sqrt{\eps}(1+\beta\|x_0^*\|);$

$iii)\;|g(x_{\eps}^*)-g(x_0^*)| \leq\sqrt{\eps}(\|x_0\|+\beta
|\langle x_0^*,x_0\rangle|)+2\eps;$

$iv)\;\|x_{\eps}-x_0\| \leq\sqrt{\eps};$

$v)\;|\langle x_{\eps},x^*\rangle -\langle
x_0,x^*\rangle|\leq\sqrt{\eps}\|x^*\|\;for\;all\;x^*\in X^*\;and$

$vi)\;x_0\in\partial_{2\eps} g(x_{\eps}^*).$

$vii)\;|\langle x_{\eps}-x_0,x_{\eps}^*-x_0^*\rangle|\leq\eps$

\end{thm}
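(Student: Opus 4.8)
The plan is to move the whole problem off the dual space $X^*$ and back onto the Banach space $X$ by Fenchel conjugacy, settle it there with the primal Ekeland argument of Borwein \cite{Borw}, and transport the conclusion back. The pivotal structural observation is Corollary \ref{Cor1}: since $g$ is proper, convex and $w^*$-lower semicontinuous, it is the conjugate of $f:=g^*|_X$, so $g=f^*$. Subgradients of $g$ that happen to lie in $X$ then correspond \emph{exactly} to genuine subgradients of $f$ on $X$. This is what makes the two features that could fail in the bidual — the membership ``$\cap X$'' and the existence of an \emph{exact} subgradient — come for free, because on $X$ the function $f$ is proper, convex and lower semicontinuous and $X$ is complete, so Ekeland's principle applies with no RNP hypothesis.

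First I would record that $f=g^*|_X$ is proper: downward properness holds because $g$ is proper (so $g^*>-\infty$ everywhere), while $x_0\in\partial_\eps g(x_0^*)\cap X$ forces $g^*(x_0)\le\langle x_0,x_0^*\rangle-g(x_0^*)+\eps<+\infty$ by Proposition \ref{P3}(i), so $x_0\in\mathrm{dom}\,f$; convexity and lower semicontinuity are inherited from the conjugate $g^*$. Next I would transport the $\eps$-subdifferential datum: from $x_0\in\partial_\eps g(x_0^*)=\partial_\eps f^*(x_0^*)$ and the convexity and lower semicontinuity of $f$, Corollary \ref{Cor2}(ii) gives $x_0^*\in\partial_\eps f(x_0)$. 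Thus the hypotheses have been converted into a purely primal datum: a base point $x_0\in\mathrm{dom}\,f$ together with an $\eps$-subgradient $x_0^*\in\partial_\eps f(x_0)$.

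I would then run Borwein's construction on $f$ for the given $\eps$ and $\beta$: apply Ekeland's variational principle to $x\mapsto f(x)-\langle x_0^*,x\rangle$ on $X$ — of which $x_0$ is an $\eps$-minimizer by the $\eps$-subgradient inequality — carrying Borwein's $\beta$-weighted perturbation, so as to produce $x_\eps\in X$ and $x_\eps^*\in X^*$ with $x_\eps^*\in\partial f(x_\eps)$, $\|x_\eps-x_0\|\le\sqrt\eps$, $\|x_\eps^*-x_0^*\|\le\sqrt\eps(1+\beta\|x_0^*\|)$, and $|\langle x_\eps^*-x_0^*,x_\eps-x_0\rangle|\le\eps$. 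Transporting back by Corollary \ref{Cor2}(i): $x_\eps^*\in\partial f(x_\eps)$ yields $x_\eps\in\partial f^*(x_\eps^*)=\partial g(x_\eps^*)$ with $x_\eps\in X$, which is (i); the two norm bounds are (ii) and (iv), and (v) is only (iv) re-expressed through the pairing, since $X\hookrightarrow X^{**}$ is isometric; the cross term is (vii). For (vi) I would combine $x_\eps^*\in\partial f(x_\eps)$, $x_0^*\in\partial_\eps f(x_0)$ and (vii) in two lines to obtain $x_\eps^*\in\partial_{2\eps}f(x_0)$, and then Corollary \ref{Cor2}(i) turns this into $x_0\in\partial_{2\eps}f^*(x_\eps^*)=\partial_{2\eps}g(x_\eps^*)$.

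The hard part will be the value estimate (iii), the one conclusion phrased through $g=f^*$ rather than through $f$. Here I would start from the Fenchel relations of Propositions \ref{P1} and \ref{P3}: the exact subgradient gives $g(x_\eps^*)=\langle x_\eps,x_\eps^*\rangle-f(x_\eps)$, while the $\eps$-subgradient gives $g(x_0^*)=\langle x_0,x_0^*\rangle-f(x_0)+\theta$ with $0\le\theta\le\eps$. Subtracting, using $\langle x_0^*,x_\eps-x_0\rangle-\eps\le f(x_\eps)-f(x_0)\le\langle x_\eps^*,x_\eps-x_0\rangle$, and absorbing the cross term (vii), the difference $g(x_\eps^*)-g(x_0^*)$ collapses to $\langle x_\eps^*-x_0^*,x_0\rangle$ plus a controlled $O(\eps)$ remainder. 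The delicate point is that a crude Cauchy–Schwarz bound on $\langle x_\eps^*-x_0^*,x_0\rangle$ only yields $\sqrt\eps\,\beta\|x_0^*\|\,\|x_0\|$, whereas (iii) demands the sharper $\sqrt\eps\,\beta|\langle x_0^*,x_0\rangle|$; recovering it requires using the specific direction of Borwein's $\beta$-perturbation (which perturbs along $x_0^*$, so that pairing against $x_0$ produces exactly $\langle x_0^*,x_0\rangle$ rather than $\|x_0^*\|\,\|x_0\|$). This constant-faithful bookkeeping, and not any conceptual step, is where the main work of the proof lies.
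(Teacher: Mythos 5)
Your proposal is correct and follows essentially the same route as the paper's own proof: pass to $f=g^*|_X$ via Corollary \ref{Cor1}, transport the $\eps$-subgradient by Corollary \ref{Cor2}, apply Borwein's theorem to $f$ on $X$, transport the conclusions back, and recover (iii) by Fenchel bookkeeping combined with the direction-sensitive bound $|\langle x_\eps^*-x_0^*,x_0\rangle|\leq\sqrt{\eps}(\|x_0\|+\beta|\langle x_0^*,x_0\rangle|)$. The paper obtains that bound simply by citing conclusion 5) of Borwein's theorem at $x=x_0$, which is precisely the estimate you identified as the crux of (iii).
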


\begin{proof}Assume that $g$ is a proper convex $w^*$-lower semicontinuous function
on $X^*$, then by Corollary \ref{Cor1}, $g=(g^*|_X)^*$. We write
$f=g^*|_X$, then $g=f^*$ and that $f$ is proper, convex and lower
semicontinuous function on $X$.

Assume that $x_0\in\partial_{\eps}g(x_0^*)\cap
X$=$\partial_{\eps}f^*(x_0^*)\cap X$, then by Corollary \ref{Cor2},
$x_0^*\in\partial_{\eps}f(x_0)$. Apply J.Borwein's theorem to $f$
and Banach space $X$. Then there exist points $x_{\eps}$ $\in$ dom ($f$)
and $x_{\eps}^*\in X^*$ such that

$1)\;x_{\eps}^*\in \partial
f(x_{\eps});\;\;\;\;\;\;\;\;\;\;2)\;\|x_{\eps}-x_0\|
\leq\sqrt{\eps};$

$3)\;|f(x_{\eps})-f(x_0)| \leq\sqrt{\eps}(\sqrt{\eps}+1/\beta);$

$4)\;\|x_{\eps}^*-x_0^*\| \leq\sqrt{\eps}(1+\beta\|x_0^*\|) ;$

$5)\;|\langle x_{\eps}^*,x\rangle -\langle
x_0^*,x\rangle|\leq\sqrt{\eps}(\|x\|+\beta |\langle
x_0^*,x\rangle|)\;\rm{for\;all}\;$$x\in X$

$6)\;x_{\eps}^*\in\partial_{2\eps} f(x_0)\;\rm{and}$

$7)\;|\langle x_{\eps}^*-x_0^*,x_{\eps}-x_0\rangle|\leq\eps.$

Hence  i), ii), iv), v), vi)and vii) is obvious and that it suffices to
prove iii).

By Proposition \ref{P1} and Proposition \ref{P3}, since
$x_0^*\in\partial_{\eps}f(x_0)$ and $x_{\eps}^*\in\partial
f(x_{\eps})$ we have

$(a)\;x_0^*(x_0)\leq f(x_0)+f^*(x_0^*)\leq x_0^*(x_0)+\eps;$

$(b)\;f(x_{\eps})+f^*(x_{\eps}^*)= x_{\eps}^*(x_{\eps});$

$(c)\;x_{\eps}^*(x_0-x_{\eps})\leq f(x_0)-f(x_{\eps})\leq
x_0^*(x_0-x_{\eps})+\eps$.

Hence,
$$x_0^*(x_0)-f(x_0)\leq f^*(x_0^*)\leq
x_0^*(x_0)-f(x_0)+\eps$$
and
$$g(x_{\eps}^*)-g(x_0^*)=f^*(x_{\eps}^*)-f^*(x_0^*)=x_{\eps}^*(x_{\eps})-f(x_{\eps})-f^*(x_0^*).$$
So
$$g(x_{\eps}^*)-g(x_0^*)\leq x_{\eps}^*(x_{\eps})-f(x_{\eps})-x_0^*(x_0)+f(x_0)$$

$$\;\;\;\;\;\;\;\;\;\;\;\;\;\;\;\;\;\;\;\;\;\;\leq x_{\eps}^*(x_{\eps})-x_0^*(x_0)+x_0^*(x_0-x_{\eps})+\eps$$

$$=\langle x_{\eps}^*-x_0^*, x_{\eps}\rangle +\eps\;$$
and
$$g(x_{\eps}^*)-g(x_0^*)\geq f(x_0)-x_0^*(x_0)-\eps+x_{\eps}^*(x_{\eps})-f(x_{\eps})$$

$$\;\;\;\;\;\;\;\;\;\;\;\;\;\;\;\;\geq x_{\eps}^*(x_{\eps})-x_0^*(x_0)+x_{\eps}^*(x_0-x_{\eps})-\eps$$

$$=\langle x_{\eps}^*-x_0^*,x_0\rangle-\eps.\;\;\;\;\;\;\;$$
Since 7)
$$|\langle x_{\eps}^*-x_0^*,x_{\eps}-x_0\rangle|\leq\eps$$
and 5)
$$|\langle
x_{\eps}^*-x_0^*,x_0\rangle|\leq\sqrt{\eps}(\|x_0\|+\beta |\langle
x_0^*,x_0\rangle|),$$
we have
$$|\langle x_{\eps}^*-x_0^*,x_{\eps}\rangle|\leq\sqrt{\eps}(\|x_0\|+\beta
|\langle x_0^*,x_0\rangle|)+\eps.$$
Hence
$$\;|g(x_{\eps}^*)-g(x_0^*)| \leq\sqrt{\eps}(\|x_0\|+\beta
|\langle x_0^*,x_0\rangle|)+2\eps$$
and we complete the proof.
\end{proof}

As a corollary, we obtain the following dual version of Theorem 3.17 in\cite{Phe} which was raised by Fabian during his visit to Xiamen University in 2012.
\begin{thm}\label{T0} Let $g$ be a proper convex $w^*$-lower semicontinuous function
on $X^*$. Then given any point $x_0^*$ $\in$ $\rm{dom}$ ($g$), $\varepsilon>0$
and $x_0\in\partial_\eps g(x_0^*)\cap X$ there exist points
$x_\eps^*$ $\in$ $\rm{dom}$ ($g$) and $x_\eps\in X$ such that

$$\;\;x_{\eps}\in \partial g(x_{\eps}^*)\cap
X,\;\;\|x_{\eps}^*-x_0^*\|\leq\sqrt{\eps}\;\;\;and
\;\;\;\|x_{\eps}-x_0\|\leq\sqrt{\eps}.\;$$
\end{thm}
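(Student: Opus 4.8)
The plan is to obtain this as the $\beta=0$ instance of Theorem \ref{T1}. The hypotheses of the present statement ($g$ proper convex $w^*$-lower semicontinuous, $\eps>0$, $x_0^*\in\mathrm{dom}(g)$, $x_0\in\partial_\eps g(x_0^*)\cap X$) are exactly those of Theorem \ref{T1}, with the auxiliary parameter $\beta$ left free; and the three assertions we must prove are conclusions i), ii) and iv) of Theorem \ref{T1}. Conclusions i) and iv) do not involve $\beta$, while ii) reads $\|x_\eps^*-x_0^*\|\le\sqrt{\eps}(1+\beta\|x_0^*\|)$, whose $\beta=0$ specialization is precisely the desired bound $\|x_\eps^*-x_0^*\|\le\sqrt{\eps}$. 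So the first step is simply to set $\beta=0$.

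The point that needs care is whether this specialization is legitimate. Examining the proof of Theorem \ref{T1}, the only place where $\beta=0$ is forbidden is the value estimate: condition 3) of the invoked theorem of Borwein, $|f(x_\eps)-f(x_0)|\le\sqrt{\eps}(\sqrt{\eps}+1/\beta)$, carries a factor $1/\beta$, and it feeds only into conclusion iii). Since the present statement makes no claim about $g(x_\eps^*)-g(x_0^*)$, that estimate is never used here, and the remaining conclusions --- in particular i), ii), iv) --- survive the passage $\beta\to 0^+$.

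Accordingly, I would bypass the $\beta$-parametrized packaging and return to the underlying mechanism, which is the only real content. Following the reduction in the proof of Theorem \ref{T1}, write $f=g^*|_X$, so that $g=f^*$ with $f$ proper, convex and lower semicontinuous on $X$; by Corollary \ref{Cor2} the hypothesis $x_0\in\partial_\eps g(x_0^*)\cap X$ is equivalent to $x_0^*\in\partial_\eps f(x_0)$. I would then apply the Bishop--Phelps--Bronsted--Rockafellar theorem to $f$ in its balanced two-parameter form, taking the perturbation scale equal to $\sqrt{\eps}$; this produces $x_\eps\in X$ and $x_\eps^*\in X^*$ with $x_\eps^*\in\partial f(x_\eps)$, $\|x_\eps-x_0\|\le\sqrt{\eps}$ and $\|x_\eps^*-x_0^*\|\le\sqrt{\eps}$. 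Converting back through Corollary \ref{Cor2} (using that $f$ is convex and lower semicontinuous) gives $x_\eps\in\partial f^*(x_\eps^*)\cap X=\partial g(x_\eps^*)\cap X$, which is conclusion i), and the two norm bounds are conclusions ii) (at $\beta=0$) and iv). The expected obstacle is exactly this $\beta=0$ degeneracy; once one notices that conclusion iii) is not required, choosing the balanced scale $\sqrt{\eps}$ in the Bishop--Phelps--Bronsted--Rockafellar step cleanly removes the spurious $1/\beta$ and the corollary follows.
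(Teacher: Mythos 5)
Your proposal is correct and takes essentially the same route as the paper: the paper states Theorem \ref{T0} precisely as a corollary of Theorem \ref{T1}, namely the $\beta=0$ specialization of conclusions i), ii), iv) that you describe, and your worry about the $1/\beta$ term is indeed harmless (in fact Borwein's estimate 3) is never used in deriving any of Theorem \ref{T1}'s conclusions, so the proof goes through verbatim at $\beta=0$). Your alternative ``direct'' argument --- writing $f=g^*|_X$, converting via Corollary \ref{Cor2}, and applying the Bishop--Phelps--Bronsted--Rockafellar theorem at the balanced scale $\sqrt{\eps}$ --- is just the proof of Theorem \ref{T1} stripped to its core, so it is the same mechanism rather than a genuinely different approach.
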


\begin{cor}Let $U$ be an open convex subset of a dual Banach
space $X^*$ and let $g:U \rightarrow \mathbb{R}$ be a convex
$w^*-l.s.c$-function. Then the set $\{x^*\in U:\partial g(x^*)\cap
X\neq \emptyset\}$ is dense in $U$. Moreover, if $g$ is Frechet
differentiable at some $x^*\in U$, then the derivative $g'(x^*)$
belongs to $X$.
\end{cor}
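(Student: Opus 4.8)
The plan is to deduce both assertions from Theorem~\ref{T0} and Proposition~\ref{P6} by a localization argument. First I would record that $g$ is automatically norm-continuous on $U$: since $g$ is convex, finite on the open set $U$, and $w^*$-l.s.c.\ (hence norm-l.s.c.), the norm-closed sublevel sets $\{x^*\in U: g(x^*)\le n\}$ cover $U$, so by the Baire category theorem (applied in the complete space $X^*$) one of them has nonempty interior, and convexity then propagates local boundedness above, yielding continuity throughout $U$. This guarantees that $g$ is finite and well-behaved near every point of $U$, so that Fr\'echet differentiability and the subgradient estimates below make sense.

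For the density statement, fix $x_0^*\in U$ and choose $\delta>0$ with $\bar B(x_0^*,\delta)\subseteq U$. Put $K=\bar B(x_0^*,\delta)$ and define $G:X^*\to(-\infty,+\infty]$ by $G=g$ on $K$ and $G=+\infty$ off $K$. The crucial point is that $G$ is a proper convex $w^*$-l.s.c.\ function on all of $X^*$: the ball $K$ is $w^*$-compact by the Banach--Alaoglu theorem, $g|_K$ is $w^*$-l.s.c.\ on this $w^*$-compact convex set, and hence each sublevel set $\{x^*\in K: g(x^*)\le r\}$ is $w^*$-closed in $X^*$; this makes $G$ $w^*$-l.s.c., while convexity and properness are clear. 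Now $x_0^*$ lies in the norm-interior of $\operatorname{dom}G$, so for each $\eps\in(0,\delta^2)$ Proposition~\ref{P6} yields $x_0\in\partial_{\eps}G(x_0^*)\cap X$, and Theorem~\ref{T0} produces $x_\eps^*\in\operatorname{dom}G$ and $x_\eps\in X$ with $x_\eps\in\partial G(x_\eps^*)\cap X$ and $\|x_\eps^*-x_0^*\|\le\sqrt{\eps}<\delta$. Thus $x_\eps^*$ lies in the interior of $K$, and a routine convexity argument (taking $z^*=x_\eps^*+t(w^*-x_\eps^*)$ for small $t>0$ and letting $t\to0^+$) upgrades the subgradient inequality $\langle x_\eps,z^*-x_\eps^*\rangle\le g(z^*)-g(x_\eps^*)$ from $z^*\in K$ to all $z^*\in U$. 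Hence $x_\eps\in\partial g(x_\eps^*)\cap X$ with $x_\eps^*$ arbitrarily close to $x_0^*$, which gives density of the set $\{x^*\in U:\partial g(x^*)\cap X\neq\emptyset\}$.

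For the second assertion, suppose $g$ is Fr\'echet differentiable at $x^*\in U$, so that $\partial g(x^*)=\{g'(x^*)\}$ with $g'(x^*)\in X^{**}$. By the density just proved, choose $x_n^*\to x^*$ with $y_n\in\partial g(x_n^*)\cap X$. Fr\'echet differentiability forces the subdifferential map to be norm-to-norm upper semicontinuous at $x^*$, i.e.\ $\|y_n-g'(x^*)\|\to0$; this is the standard estimate obtained from $\langle y_n-g'(x^*),v\rangle\le r(x_n^*-x^*+v)-r(x_n^*-x^*)$, where $r$ is the differentiability remainder, by optimizing over $\|v\|=\sqrt{\|x_n^*-x^*\|}$. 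Since each $y_n$ lies in $X$ and $X$ is norm-closed in $X^{**}$, the limit $g'(x^*)$ belongs to $X$.

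The main obstacle is the construction in the second paragraph: Theorem~\ref{T0} and Proposition~\ref{P6} are available only for functions defined on all of $X^*$, so the whole argument hinges on capping $g$ to a ball without destroying $w^*$-lower semicontinuity. This is precisely where the dual structure is exploited, through the $w^*$-compactness of closed balls in $X^*$; the subsequent localization of the subgradient back to $g$ and the closedness of $X$ in $X^{**}$ are then routine.
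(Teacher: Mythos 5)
Your proof is correct and its core is the same as the paper's: both arguments cap $g$ off on a closed ball around $x_0^*$ to manufacture a proper convex $w^*$-l.s.c.\ function on all of $X^*$ (Banach--Alaoglu giving $w^*$-compactness of the ball, hence $w^*$-lower semicontinuity of the truncation), then apply Proposition~\ref{P6} followed by Theorem~\ref{T0} with a parameter of order $\delta^2$ so that the perturbed point stays inside the ball. In fact you are more careful than the paper at two points: the paper's estimate $\|x_{\eps^2}^*\|\leq \eps$ only places the perturbed point in the \emph{closed} ball, where a subgradient of the truncated function need not be a subgradient of $g$ (the normal cone of the ball intrudes), whereas your choice $\eps<\delta^2$ gives the strict inclusion and your explicit convexity argument ($z_t^*=x_\eps^*+t(z^*-x_\eps^*)$, $t\to 0^+$) performs the upgrade from $\partial G$ to $\partial g$ that the paper passes over silently. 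For the differentiability statement you take a slightly different route: the paper applies Proposition~\ref{P6} to get points of $\partial_\eps g(x^*)\cap X$ at the point itself and invokes the $\eps$-subdifferential form of the \v{S}mulyan lemma, while you use the density part to get exact subgradients $y_n\in\partial g(x_n^*)\cap X$ at nearby points and invoke norm-to-norm upper semicontinuity of $\partial g$ at a point of Fr\'echet differentiability; both are standard \v{S}mulyan-type criteria and both finish with norm-closedness of $X$ in $X^{**}$, so the difference is cosmetic.
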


\begin{proof} For the first statement, fix any point $x_0^*\in U$ and any $\eps>0$ so small that $B(x_0^*,\eps)\subset U$. Define $f: X^*\rightarrow
(-\infty,+\infty]$ by $f(x^*)=g(x_0^*+x^*)$ if $x^*\in \eps B_{X^*}$
and $f(x^*)=+\infty$ if $x^* \notin \eps B_{X^*}$. Then $f$ is
proper, convex and $w^*-l.s.c$ on $X^*$.

since $0$ $\in$ dom ($f$), by Proposition \ref{P6} there exists
$x_0\in\partial_{\eps^2} f(0)\cap X$, then by Theorem \ref{T0} there
exist point $x_{\eps^2}^*$ $\in$ dom ($f$) and $x_{\eps^2}\in X$ such that

$$\;\;x_{\eps^2}\in
\partial f(x_{\eps^2}^*), \;\; \|x_{\eps^2}^*-0\|\leq \eps\;\;and\;\;\|x_{\eps^2}-x_0\|\leq\eps.$$

Hence

$x_0^*+x_{\eps^2}^*\in B(x_0^*,\eps)\subset U$ and $x_{\eps^2}\in
\partial g(x_0^*+x_{\eps^2}^*)$

For the second statement. Assume that $g$ is Frechet differentiable
at some $x_0^*\in U$, then by the first statement and Proposition \ref{P6}, for every $\eps>0$, there exists $x_{\eps}\in X$ such that
$$x_{\eps}\in
\partial_{\eps} f(0)=\partial_{\eps} g(x_0^*)$$

and let $\eps\rightarrow 0$, then by $\check{S}$mulyan lemma,
$x_{\eps}\rightarrow g'(x_0^*)$ so $g'(x_0^*)\in X$.
\end{proof}

\section{The Main Results}

In this section, we introduce some new notions such as $w^*$-monotone, $w^*$-maximal monotone, $w^*$-cyclically monotone and $w^*$-maximal cyclically monotone, and use some techniques from J. Benoist and A. Daniilidis \cite{Ben} to obtain the subdifferential representation of $w^*$-lower semicontinuous convex functions on $X^*$.

\subsection{$w^*$-Strongly exposed points and Radon-Nikodym property}

The definitions of ($w^*$-)strongly exposed points and ($w^*$-)strongly exposed functionals of a convex set on $X$($X^*$) can be found in \cite{Ben} and \cite{Phe}.

\begin{defin}\label{D1}Let $C$ be a non-empty closed convex subset of $X^*$. A point $u^* \in C$ is
said $w^*$-strongly exposed if there exists $u\in X$ such that for each sequence $\{u_n^*\}\subset C$ the
following implication holds
$$\lim_{n\rightarrow +\infty }\langle u_n^*,u\rangle=\sigma_C(u)\;\Rightarrow \;\lim_{n\rightarrow +\infty }u_n^*=u^*$$
\end{defin}
In such a case $u$ is said to be a $w^*$ strongly exposing functional for the point $u^*$ in $C$.
We denote by $w^*$-Exp $(C, u^*)$ the set of all functionals of X satisfying this property.
Let us further denote by $w^*$-exp $C$ the set of $w^*$ strongly exposed points of $C$. We also denote by $w^*$-Exp $C$ the set of all $w^*$ strongly exposing functionals.

\begin{defin} A point $x^*$ $\in $ dom $g$ is called $w^*$-strongly exposed for the proper lower semi-
continuous convex function g if
$$((x^*,g(x^*))\in w^*-\rm{exp\;(epi}\; g)$$
\end{defin}
We denote by $w^*$-exp $g$ the set of $w^*$-strongly exposed points of $g$.
For every $x^*$ $\in$ $w^*$-exp $g$ we denote by $w^*$-Exp $(g, x^*)$ the set of all $x\in X$ satisfying
$$(x, -1)\in w^*-\rm{Exp\;(epi \;}g, (x^*,g(x^*))). $$
Obviously, $w^*$-Exp $(g, x^*)$ $\subset$ $\partial g(x^*)\cap X$. We also denote by $w^*$-Exp $g$ the set of $w^*$-strongly exposed functionals of $g$. Indeed,
$$w^*-\rm {Exp}\; g=\bigcup_{x^*\in w^*-\rm{exp}\; g} w^*-\rm{Exp}\; (g, x^*).$$

\begin{defin}A Banach space $X$(resp.$X^*$) is said to have the Radon-Nikodym property, if every
non-empty bounded closed (resp. $w^*$-compact) convex subset $C$ of $X$ (resp. $X^*$) can be represented as the ($w^*$)-closed convex
hull of its (resp. $w^*$)-strongly exposed points, that is,
$$C = \overline{\rm{co}}(\rm{exp}\;C)\;\;(resp. \;C = \overline{\rm{co}}^{w^*}(w^*-\rm{exp}\;C)).$$
\end{defin}

\begin{defin}A proper $w^*$-lower semicontinuous function $g : X^* \rightarrow \R\cup\{+\infty\}$ is called
$w^*$-epi-pointed if
int (dom $g^*|_X$) $\neq$ $\emptyset$.
\end{defin}

Through the similar deduction of [\cite{Ben}, Proposition 2.3, Proposition 3.6], we have the following two propositions, respectively.
\begin{prop}\label{P4}Suppose that $X^*$ has the Radon-Nikodym property and $C$ is a nonempty
closed convex set. Then $w^*$-$\rm{Exp}$ $C$ is dense in $\rm{int\; dom}$ $\sigma_C|_X$.
\end{prop}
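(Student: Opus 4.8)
The plan is to identify the $w^*$-strongly exposing functionals with points of Fr\'echet differentiability of the support function, and then to exploit the fact that the Radon--Nikodym property of $X^*$ makes $X$ an Asplund space. Write $\phi := \sigma_C|_X$, so that $\phi(u) = \sup_{u^*\in C}\langle u^*,u\rangle$ for $u\in X$; this is a convex, lower semicontinuous sublinear functional on $X$. On the open convex set $U :=$ int dom $\phi$ the function $\phi$ is finite, hence (being convex and finite on an open set) locally Lipschitz and in particular continuous. Since $X^*$ has the Radon--Nikodym property, $X$ is an Asplund space (see \cite{Phe}); consequently every continuous convex function on an open convex subset of $X$ is Fr\'echet differentiable on a dense $G_\delta$ subset of that set. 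Applying this to $\phi$ on $U$ produces a dense set $D\subseteq U$ at each point of which $\phi$ is Fr\'echet differentiable.

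The heart of the argument is to show $D\subseteq w^*$-Exp $C$. Fix $u\in D$ and set $u^* := \nabla\phi(u)\in X^*$; choose $\delta_0>0$ with $B(u,\delta_0)\subseteq U$. Let $\{u_n^*\}\subset C$ be any sequence with $\langle u_n^*,u\rangle\to \sigma_C(u)=\phi(u)$, and put $\eta_n := \phi(u)-\langle u_n^*,u\rangle\ge 0$, so $\eta_n\to 0$. Given $\eps>0$, Fr\'echet differentiability supplies $\delta\in(0,\delta_0]$ with $\phi(u+h)-\phi(u)-\langle u^*,h\rangle\le\eps\|h\|$ whenever $\|h\|\le\delta$. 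Since $\langle u_n^*,u+h\rangle\le\phi(u+h)$ and $\langle u_n^*,u\rangle=\phi(u)-\eta_n$, subtraction gives $\langle u_n^*-u^*,h\rangle\le\eps\|h\|+\eta_n$ for all $\|h\|\le\delta$. Taking the supremum over $\|h\|\le\delta$ yields $\delta\|u_n^*-u^*\|\le\eps\delta+\eta_n$, hence $\limsup_n\|u_n^*-u^*\|\le\eps$; as $\eps>0$ is arbitrary, $\|u_n^*-u^*\|\to 0$. Because $C$ is norm-closed this forces $u^*\in C$, and the convergence just established is precisely the implication in Definition \ref{D1} expressing that $u^*$ is $w^*$-strongly exposed by the functional $u$. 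Therefore $u\in w^*$-Exp $(C,u^*)\subseteq w^*$-Exp $C$.

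Combining the two steps, $D\subseteq w^*$-Exp $C$ with $D$ dense in $U=$ int dom $\sigma_C|_X$, and hence $w^*$-Exp $C$ is dense in int dom $\sigma_C|_X$, as claimed.

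I expect the main obstacle to lie in the passage from differentiability to exposure in the dual setting. One must apply the \v{S}mulyan-type estimate to maximizing sequences drawn from $C$ itself rather than from its $w^*$-closed convex hull (over which $\sigma_C$ is really the supremum), so that the norm limit $u^*$ is recaptured inside the norm-closed set $C$ and genuinely qualifies as a $w^*$-strongly exposed point; and one must correctly invoke the equivalence that $X^*$ has the Radon--Nikodym property if and only if $X$ is Asplund, together with the generic Fr\'echet differentiability of continuous convex functions on Asplund spaces, which is the deep input underpinning the whole argument.
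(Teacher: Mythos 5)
Your proof is correct, but it is not the paper's route --- in fact the paper supplies no written proof of Proposition \ref{P4} at all: it asserts it ``through the similar deduction of [\cite{Ben}, Proposition 2.3]''. That argument, transposed to the dual setting, is a localization argument: for $u$ in $\rm{int\;dom}\;\sigma_C|_X$ one shows that small slices $\{u^*\in C:\langle u^*,u\rangle>\sigma_C(u)-\alpha\}$ are bounded, passes to their $w^*$-closed convex hulls (which are $w^*$-compact), invokes the density of $w^*$-strongly exposing functionals of such sets --- essentially the representation property that the paper adopts as its definition of the Radon--Nikodym property of $X^*$ --- and finishes with a perturbation step returning from the hull to $C$. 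You instead inline the differentiability mechanism that underlies that density theorem: the Asplund--RNP duality ($X^*$ has RNP iff $X$ is Asplund, in \cite{Phe}), generic Fr\'echet differentiability of continuous convex functions on Asplund spaces, and the \v{S}mulyan estimate applied directly to $\phi=\sigma_C|_X$. Your computation is sound: the Fr\'echet estimate applied to maximizing sequences drawn from $C$ itself gives norm convergence to $\nabla\phi(u)$, and norm-closedness of $C$ (all that the statement assumes; $C$ need not be $w^*$-closed) recaptures the limit inside $C$, so differentiability points are genuinely $w^*$-strongly exposing in the sense of Definition \ref{D1}. Two remarks on what each route buys. Yours needs no slice localization, since differentiability is local and $\sigma_C|_X$ is automatically locally Lipschitz on the interior of its domain --- though note that this continuity uses lower semicontinuity of $\sigma_C|_X$ (a supremum of continuous linear functionals), not merely convexity plus finiteness on an open set, so your parenthetical justification there should cite the lsc property you established earlier. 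The slice-based route of \cite{Ben}, by contrast, is the one that survives in the primal setting (where $X$ has RNP and no generic Fr\'echet differentiability on $X^*$ is available), which is presumably why the paper quotes it to keep the primal and dual propositions parallel; in the dual setting treated here, both work, and yours is the more direct and self-contained.
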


\begin{prop}\label{P5}The set $w^*$-$\rm{Exp}$ $g$ is dense in $\rm{int\;(dom}$ $g^*|_X)$ if the Banach space $X^*$ has the
Radon-Nikodym property and the convex function $g$ is $w^*$-epi-pointed.
\end{prop}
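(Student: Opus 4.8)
The plan is to reduce the statement to Proposition \ref{P4} by applying it to the epigraph of $g$. I would view $X^*\times\R$ as the dual of $X\times\R$ and set $C=\mathrm{epi}\,g$; since $g$ is proper, convex and $w^*$-lower semicontinuous, $C$ is a nonempty $w^*$-closed convex subset of $X^*\times\R$. Because $X^*$ has the Radon-Nikodym property, so does $X^*\times\R$, and Proposition \ref{P4} then applies to $C$, yielding that $w^*$-$\mathrm{Exp}\,C$ is dense in $\mathrm{int\;dom}\,\sigma_C|_{X\times\R}$.

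Next I would compute the support function. For $(x,t)\in X\times\R$ one has $\sigma_C(x,t)=\sup\{\langle x^*,x\rangle+rt:(x^*,r)\in\mathrm{epi}\,g\}$, which is $+\infty$ when $t>0$, while for $t<0$ the supremum in $r$ is attained at $r=g(x^*)$, giving $\sigma_C(x,t)=(-t)\,(g^*|_X)(x/(-t))$. In particular $\sigma_C(x,-1)=(g^*|_X)(x)$, and a pair $(x,t)$ with $t<0$ lies in $\mathrm{dom}\,\sigma_C$ precisely when $x/(-t)\in\mathrm{dom}(g^*|_X)$. Since $g$ is $w^*$-epi-pointed, $\mathrm{dom}(g^*|_X)$ has nonempty interior, so the open cone $\{(x,t):t<0,\ x/(-t)\in\mathrm{int\;dom}(g^*|_X)\}$ is a nonempty subset of $\mathrm{int\;dom}\,\sigma_C$, and slicing it at $t=-1$ recovers exactly $\{(x,-1):x\in\mathrm{int\;dom}(g^*|_X)\}$.

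Then I would identify the exposing functionals. A functional $(x,t)$ can strongly expose $C$ only if $t<0$: for $t>0$ the support function is infinite, and for $t=0$ the exposed face contains the upward vertical rays of $\mathrm{epi}\,g$, so no single point is strongly exposed. By the positive homogeneity of the strong-exposure property, $(x,t)$ strongly exposes $C$ if and only if $(x/(-t),-1)$ does, and $(x,-1)\in w^*$-$\mathrm{Exp}\,C$ means exactly that $x\in w^*$-$\mathrm{Exp}\,g$. Combining this with the density of $w^*$-$\mathrm{Exp}\,C$: given $x_0\in\mathrm{int\;dom}(g^*|_X)$ the point $(x_0,-1)$ lies in $\mathrm{int\;dom}\,\sigma_C$, so there exist $(x,t)\in w^*$-$\mathrm{Exp}\,C$ arbitrarily close to $(x_0,-1)$; such $(x,t)$ automatically satisfy $t<0$, and after normalizing we obtain $x/(-t)\in w^*$-$\mathrm{Exp}\,g$ with $x/(-t)\to x_0$. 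This gives the asserted density.

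The main obstacle I expect is not a deep new idea but the last-coordinate bookkeeping: one must verify that strongly exposing functionals of the epigraph necessarily have strictly negative final coordinate (so the rescaling $x\mapsto x/(-t)$ is legitimate), that the strong-exposure property is invariant under this positive rescaling in the $w^*$ setting, and that $\mathrm{int\;dom}\,\sigma_C$ meets the hyperplane $t=-1$ in exactly $\mathrm{int\;dom}(g^*|_X)$, which is precisely where the epi-pointedness hypothesis is used. Confirming that $X^*\times\R$ inherits the Radon-Nikodym property, with the $w^*$-topology behaving correctly on the product, is routine but should be recorded.
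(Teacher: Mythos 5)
Your proposal is correct and is essentially the argument the paper intends: the paper proves Proposition \ref{P5} by invoking the ``similar deduction'' of Propositions 2.3 and 3.6 of Benoist--Daniilidis, which is exactly your reduction --- apply Proposition \ref{P4} to $C=\mathrm{epi}\,g$ in $X^*\times\R$ (the dual of $X\times\R$), compute $\sigma_C(x,t)=(-t)(g^*|_X)(x/(-t))$ for $t<0$, use $w^*$-epi-pointedness to place the cone over $\mathrm{int\,dom}(g^*|_X)$ inside $\mathrm{int\,dom}\,\sigma_C$, rule out exposing functionals with $t\geq 0$, and rescale by positive homogeneity. The bookkeeping items you flag (strict negativity of the last coordinate, invariance of strong exposure under positive scaling, and $X^*\times\R$ inheriting the Radon--Nikodym property) are precisely the routine verifications left implicit by the paper's citation.
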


\subsection{$w^*$-Cyclically monotone operators}

The classical concepts of monotone, maximal monotone, cyclically monotone and maximal cyclically monotone can be found in \cite{Ben} and \cite{Phe}. In this section, we will introduce some new notions such as $w^*$-monotone, $w^*$-maximal monotone, $w^*$-cyclically monotone and $w^*$-maximal cyclically monotone. In section 3, we will show that $\partial
g\cap X$ is the unique $w^*$-maximal cyclically monotone operator, where $g$ is a proper convex $w^*$-lower semicontinuous function on $X^*$.

\begin{defin} A set-valued map $\Phi: X^*\rightarrow 2^X$ is
said to be a $w^*$-monotone operator provided

$$\langle x-y, x^*-y^*\rangle\geq0$$
whenever $x^*,y^*\in X^*$ and $x\in\Phi(x^*)$ and $y\in\Phi(y^*)$.
The effective domain $\rm{D}(\Phi)$ of $\Phi$ is defined by

$$\rm{D}(\Phi)=\{x^*\in X^*:\Phi(x^*)\neq\emptyset\}.$$

\end{defin}

\begin{defin} A set-valued map $\Phi: X^*\rightarrow 2^X$ is
said to be $w^*$ n-cyclically monotone operator provided

$$\sum_{1\leq k\leq n}\langle x_k, x^*_k-x^*_{k+1}\rangle\geq0$$
whenever $n\geq 2$ and $x^*_1$, $x^*_2$,$\cdots$,$x^*_n\in
\rm{D}(\Phi)$, $x^*_{n+1}=x^*_1$ and $x_k\in\Phi(x^*_k)$, $k=1,2,3,\ldots,n$.

We say that $\Phi$ is $w^*$-cyclically monotone if it is $w^*$
n-cyclically monotone for every n. Clearly, a $w^*$ 2-cyclically
monotone operator is $w^*$-monotone.
\end{defin}

\begin{defin}
A $w^*$-monotone operator $\Phi$ is said to $w^*$-maximal cyclically
monotone provided $\Phi=\Psi$ whenever $\Psi$ is $w^*$-cyclically
monotone and Gr ($\Phi$) $\subset$ Gr ($\Psi$). Clearly, a $w^*$-maximal
monotone operator which is $w^*$-cyclically monotone is necessarily
$w^*$-maximal cyclically monotone.
\end{defin}

\subsection{ Subdifferential representation of convex functions on $X^*$}
By the similar reasoning, we have the following two lemmas that may be considered as the dual versions of Lemma 3.22 and Lemma 3.23 in\cite{Phe}.

\begin{lem}\label{L1} Let $g$ be a proper convex $w^*-l.s.c$-function on $X^*$. If $\alpha,\beta>0$, $x_0^*\in X^*$ and $g(x_0^*)<\inf _{X^*}g+\alpha\beta$, then there exist $x^*\in X^*$ and $x\in \partial g(x^*)\cap X$ such that $\|x^*-x_0^*\|<\beta$ and $\|x\|<\alpha$.

\end{lem}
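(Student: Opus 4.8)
Lemma \ref{L1} states: for a proper convex $w^*$-l.s.c. function $g$ on $X^*$ with $\alpha,\beta>0$ and $x_0^*\in X^*$ satisfying $g(x_0^*)<\inf_{X^*}g+\alpha\beta$, there exist $x^*\in X^*$ and $x\in\partial g(x^*)\cap X$ with $\|x^*-x_0^*\|<\beta$ and $\|x\|<\alpha$.

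Let me plan a proof.

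The statement is the dual version of Lemma 3.22 in Phelps. The key is that $g(x_0^*)$ is close to the infimum, with a product gap $\alpha\beta$. This immediately suggests applying my Theorem \ref{T1} (or its clean corollary \ref{T0}), since that's the tool I developed precisely to locate points where $\partial g \cap X \neq \emptyset$.

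First, I would extract an approximate subgradient. Since $g(x_0^*) < \inf_{X^*} g + \alpha\beta$, the point $x_0^*$ is an $\alpha\beta$-approximate minimizer. The subgradient $0 \in X \subset X^*$ should belong to some $\partial_\varepsilon g(x_0^*)$: indeed, for any $y^* \in X^*$, $\langle 0, y^*-x_0^*\rangle = 0 \leq g(y^*) - g(x_0^*) + \alpha\beta$ because $g(y^*) \geq \inf_{X^*} g > g(x_0^*) - \alpha\beta$. So $0 \in \partial_{\alpha\beta}\, g(x_0^*) \cap X$, giving me a valid starting approximate subgradient in $X$ with $x_0 = 0$.

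Then I apply Theorem \ref{T1} with $\varepsilon = \alpha\beta$ (so $x_0 = 0 \in \partial_\varepsilon g(x_0^*)\cap X$), choosing the free parameter $\beta$ in that theorem to tune the estimates. From conclusion (i) I get $x_\varepsilon \in \partial g(x_\varepsilon^*)\cap X$, which is exactly the membership I need. The norm bound $\|x_\varepsilon^* - x_0^*\| \leq \sqrt{\varepsilon}(1 + \beta\|x_0^*\|)$ from (ii) controls the dual distance, and $\|x_\varepsilon - x_0\| = \|x_\varepsilon\| \leq \sqrt{\varepsilon}$ from (iv) controls $\|x\|$. I need both bounds to come out strictly below $\beta$ and $\alpha$ respectively. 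With $\varepsilon = \alpha\beta$, the bound (iv) gives $\|x_\varepsilon\| \leq \sqrt{\alpha\beta}$, and I should instead rescale: set $\varepsilon$ small and exploit homogeneity of the parameter. The cleaner route is to choose the Borwein-theorem parameter so that after substituting $\varepsilon=\alpha\beta$ the right-hand sides become $\sqrt{\alpha\beta}\cdot(\text{factor})$; selecting the factor to be $\sqrt{\alpha/\beta}$ on one bound and $\sqrt{\beta/\alpha}$ on the other balances them to land below $\alpha$ and $\beta$.

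The main obstacle is the bookkeeping: matching the two simultaneous bounds $\|x_\varepsilon^*-x_0^*\|<\beta$ and $\|x_\varepsilon\|<\alpha$ by a single choice of parameters, and handling the strictness of the inequalities (the hypothesis $g(x_0^*)<\inf_{X^*}g+\alpha\beta$ is strict, which gives the slack to push $\leq$ to $<$). I expect to shrink $\varepsilon$ slightly below $\alpha\beta$ using the strict gap, then tune the Borwein parameter — plausibly $\beta_{\mathrm{Borw}}=\alpha^{-1}$ combined with $\varepsilon$ chosen so $\sqrt{\varepsilon}<\alpha$ and $\sqrt{\varepsilon}(1+\beta_{\mathrm{Borw}}\|x_0^*\|)<\beta$ — to convert the approximate-minimizer data into the required exact-subgradient conclusion.
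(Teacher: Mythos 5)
Your reduction to the approximate subdifferential is correct and is exactly the intended first step: since $g(y^*)\geq\inf_{X^*}g>g(x_0^*)-\alpha\beta$ for all $y^*\in X^*$, one has $0\in\partial_{\varepsilon}g(x_0^*)\cap X$ for any $\varepsilon$ with $g(x_0^*)-\inf_{X^*}g\leq\varepsilon<\alpha\beta$. (The paper gives no explicit proof of Lemma \ref{L1}; it invokes the dual of Phelps' Lemma 3.22, whose proof is precisely this reduction followed by the Br{\o}ndsted--Rockafellar theorem \emph{with a free parameter} $\lambda$, yielding the asymmetric bounds $\|x-x_0\|\leq\varepsilon/\lambda$, $\|x^*-x_0^*\|\leq\lambda$.) The gap in your proposal is in the second half. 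The paper's Theorem \ref{T0}/Theorem \ref{T1} has no such free parameter: both estimates are pinned at $\sqrt{\varepsilon}$, and the Borwein parameter $\beta$ appearing in Theorem \ref{T1} cannot trade one bound against the other --- it only \emph{inflates} conclusion ii) by the factor $1+\beta\|x_0^*\|$ and leaves conclusion iv) untouched at $\sqrt{\varepsilon}$, so setting $\beta_{\mathrm{Borw}}=\alpha^{-1}$ buys you nothing. Moreover $\varepsilon$ is not free to shrink: it must dominate the actual gap $\delta=g(x_0^*)-\inf_{X^*}g$, which may be arbitrarily close to $\alpha\beta$. Thus if, say, $\alpha<\beta$ and $\delta\in(\alpha^2,\alpha\beta)$, every admissible $\varepsilon$ satisfies $\sqrt{\varepsilon}>\alpha$, and conclusion iv) only gives $\|x_\varepsilon\|\leq\sqrt{\varepsilon}$, which is useless; your two requirements ``$\varepsilon$ at least the gap, hence possibly near $\alpha\beta$'' and ``$\sqrt{\varepsilon}<\alpha$'' are then contradictory. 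As written, your plan only works when $\delta<\min(\alpha,\beta)^2$.

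The missing idea is to apply the homogeneity you allude to, but to the \emph{function} rather than to Borwein's parameter. Fix $\varepsilon\in(0,\alpha\beta)$ with $\delta\leq\varepsilon$, and pick $t$ with $\varepsilon/\beta^{2}<t<\alpha^{2}/\varepsilon$; such $t$ exists precisely because $\varepsilon<\alpha\beta$. The function $\tilde g=t^{-1}g$ is proper, convex and $w^*$-l.s.c., and $0\in\partial_{\varepsilon/t}\,\tilde g(x_0^*)\cap X$. Theorem \ref{T0} applied to $\tilde g$ produces $x^*\in\mathrm{dom}\,g$ and $\tilde x\in\partial\tilde g(x^*)\cap X$ with $\|x^*-x_0^*\|\leq\sqrt{\varepsilon/t}<\beta$ and $\|\tilde x\|\leq\sqrt{\varepsilon/t}$; then $x=t\tilde x\in\partial g(x^*)\cap X$ and $\|x\|\leq t\sqrt{\varepsilon/t}=\sqrt{t\varepsilon}<\alpha$. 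This rescaling recovers exactly the free-parameter form of Br{\o}ndsted--Rockafellar that the dualized Phelps argument needs, and with it your outline closes.
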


The following Lemma \ref{L2} is a consequence of Lemma \ref{L1}.
\begin{lem} \label{L2} Let $g$ be a proper convex $w^*-l.s.c$-function on $X^*$. If $x^*\in X^*$ such that $\inf _{X^*}g<g(x^*)$, then there exist $z^*$ $\in$ $\rm{dom}$ $g$ and $z\in\partial g(z^*)\cap X$ such that
$$g(z^*)<g(x^*) \;\;\rm {and}\;\;\langle z,x^*-z^*\rangle>0.$$

\end{lem}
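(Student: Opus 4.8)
The plan is to imitate the proof of Lemma~3.23 in \cite{Phe} in the $w^*$-setting, the one genuinely new difficulty being that a subgradient must be produced \emph{inside} $X$, which is exactly what Lemma~\ref{L1} (and its corollary, Theorem~\ref{T0}) delivers. First I fix a point $w^*\in\mathrm{dom}\,g$ with $g(w^*)<g(x^*)$; such a point exists precisely because $\inf_{X^*}g<g(x^*)$. Restricting $g$ to the segment $[w^*,x^*]$, I set $\phi(t)=g\big((1-t)w^*+tx^*\big)$ for $t\in[0,1]$. Then $\phi$ is convex with $\phi(0)<\phi(1)$, so after passing the minimizer of $\phi$ I can choose $t\in(0,1)$ with $\phi(0)<\phi(t)<\phi(1)$ and put $z_0^*=(1-t)w^*+tx^*$. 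By construction $g(z_0^*)=\phi(t)<g(x^*)$, and the gap $\phi(t)-\phi(0)>0$ will be the source of the required strict positivity.

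Next I extract an approximate subgradient and upgrade it to a genuine one. By Proposition~\ref{P6}, for every small $\eps>0$ there is $z_0\in\partial_\eps g(z_0^*)\cap X$. Testing the defining inequality of $\partial_\eps g(z_0^*)$ against $y^*=w^*$ gives $\langle z_0,w^*-z_0^*\rangle\le g(w^*)-g(z_0^*)+\eps=\phi(0)-\phi(t)+\eps$; since $z_0^*-w^*=t(x^*-w^*)$ and $x^*-z_0^*=\tfrac{1-t}{t}(z_0^*-w^*)$, this rearranges to $\langle z_0,x^*-z_0^*\rangle\ge\tfrac{1-t}{t}\big(\phi(t)-\phi(0)-\eps\big)$, which for $\eps<\phi(t)-\phi(0)$ is bounded below by a constant $c>0$ independent of $\eps$. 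Feeding $z_0\in\partial_\eps g(z_0^*)\cap X$ into Theorem~\ref{T0} (equivalently Theorem~\ref{T1}) then yields $z_\eps^*\in\mathrm{dom}\,g$ and a genuine subgradient $z_\eps\in\partial g(z_\eps^*)\cap X$ with $\|z_\eps^*-z_0^*\|$, $\|z_\eps-z_0\|$ and $|g(z_\eps^*)-g(z_0^*)|$ all of order $\sqrt\eps$.

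Finally I verify both conclusions for $\eps$ small. The value estimate $|g(z_\eps^*)-g(z_0^*)|=O(\sqrt\eps)$ together with the strict gap $g(x^*)-\phi(t)>0$ gives $g(z_\eps^*)<g(x^*)$. For the positivity I write $\langle z_\eps,x^*-z_\eps^*\rangle=\langle z_0,x^*-z_0^*\rangle+\langle z_\eps-z_0,x^*-z_0^*\rangle+\langle z_\eps,z_0^*-z_\eps^*\rangle$ and control the last two terms by the quantitative estimates of Theorem~\ref{T1} (items ii, iv, v), so that they are $O(\sqrt\eps)$ while the first term is $\ge c>0$; hence $\langle z_\eps,x^*-z_\eps^*\rangle>0$ once $\eps$ is small enough. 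Setting $z^*=z_\eps^*$ and $z=z_\eps$ then completes the proof. I expect the main obstacle to be exactly this last transfer: the approximate subgradient $z_0$ carries the positivity by design, but one must check that the Br\o ndsted--Rockafellar perturbation of Theorem~\ref{T0}/\ref{T1} moves the base point, the subgradient, and the function value each by a controlled amount, so that \emph{both} the strict inequality $g(z^*)<g(x^*)$ and the strict positivity $\langle z,x^*-z^*\rangle>0$ survive simultaneously; the uniform lower bound $c>0$ coming from the value gap $\phi(t)-\phi(0)$ is precisely what makes this possible.
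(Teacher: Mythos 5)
Your proof breaks down at two places, and both breakdowns are exactly the difficulties that the paper's intended route (deriving Lemma \ref{L2} from Lemma \ref{L1}, as in Phelps' proof of his Lemma 3.23) is designed to avoid. The first gap is your selection of $t$: you need $t\in(0,1)$ with $\phi(0)<\phi(t)<\phi(1)$, in particular an intermediate point $z_0^*\in\mathrm{dom}\,g$ carrying a strictly positive value gap $\phi(t)-\phi(0)$. But a convex $w^*$-l.s.c.\ function can be constant on its domain along \emph{every} segment ending at $x^*$: take $g=\delta_C$, the indicator of a $w^*$-closed convex set $C$, and $x^*\notin C$. The hypothesis $\inf_{X^*}g<g(x^*)$ holds, yet for every admissible $w^*\in C$ the function $\phi$ takes only the values $0$ and $+\infty$, so no admissible $t$ exists; and if you drop the requirement $\phi(t)<\phi(1)$ you land outside $\mathrm{dom}\,g$, where Proposition \ref{P6} cannot be invoked at all. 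This is not a marginal case: for indicators the lemma is precisely a Bishop--Phelps-type separation statement, and functions with flat pieces occur in the paper's own use of the lemma (Proposition \ref{P7} applies it to $g-x$).

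The second gap is quantitative and persists even when your intermediate $t$ exists. The approximate subgradient $z_0$ furnished by Proposition \ref{P6} depends on $\eps$ and comes with \emph{no norm bound}; your error terms are genuinely of the form $\sqrt{\eps}\,\|z_0(\eps)\|$ (this is visible both in Theorem \ref{T1}(iii), which reads $|g(z_\eps^*)-g(z_0^*)|\leq\sqrt{\eps}\,\|z_0\|+2\eps$ for $\beta=0$, and in the third term of your decomposition, $|\langle z_\eps,z_0^*-z_\eps^*\rangle|\leq(\|z_0\|+\sqrt{\eps})\sqrt{\eps}$). Calling these ``$O(\sqrt{\eps})$'' treats $\|z_0\|$ as an $\eps$-independent constant, which it is not: at a point $z_0^*$ on the boundary of $\mathrm{dom}\,g$ (and when $\mathrm{dom}\,g$ has empty interior, every point is such), every element of $\partial_{\eps}g(z_0^*)\cap X$ may have norm of order $1/\eps$, in which case $\sqrt{\eps}\,\|z_0(\eps)\|$ diverges. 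This is precisely why the paper derives Lemma \ref{L2} from Lemma \ref{L1} rather than from Proposition \ref{P6} plus Theorem \ref{T0}: the conclusion of Lemma \ref{L1} includes the bound $\|x\|<\alpha$ on the norm of the \emph{exact} subgradient it produces. The standard argument then separates $x^*$ from a sublevel set $\{g\leq\lambda\}$, $\lambda\in(\inf_{X^*}g,\,g(x^*))$, by some $u\in X$ (possible because the sublevel set is $w^*$-closed convex), applies Lemma \ref{L1} to the perturbed function $g-tu$ for small $t>0$, and uses the norm bound $\alpha$ to absorb all error terms; that route handles the indicator case and needs no intermediate-value point. I would encourage you to rework the proof along those lines.
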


\begin{prop}\label{P7} If $g$ is a proper
convex $w^*-l.s.c$-function on $X^*$, then $\partial g\cap X$ is  $w^*$-maximal monotone and $w^*$-maximal cyclically
monotone.
\end{prop}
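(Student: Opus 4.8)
The plan is to establish two separate claims: that $\partial g \cap X$ is $w^*$-maximal monotone, and that it is $w^*$-maximal cyclically monotone. For the cyclic monotonicity, I would first verify the plain $w^*$-cyclic monotonicity of $\partial g \cap X$ directly from the subdifferential inequality. Indeed, if $x_k \in \partial g(x_k^*) \cap X$ for $k = 1, \ldots, n$ with $x_{n+1}^* = x_1^*$, then the defining inequality $\langle x_k, x_{k+1}^* - x_k^* \rangle \leq g(x_{k+1}^*) - g(x_k^*)$ (obtained by pairing $x_k \in \partial g(x_k^*)$ against $x_{k+1}^* - x_k^*$, using that the subgradient lies in $X \subset X^{**}$) summed around the cycle telescopes to zero, giving $\sum_{k} \langle x_k, x_k^* - x_{k+1}^* \rangle \geq 0$. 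This shows $\partial g \cap X$ is $w^*$-cyclically monotone, hence in particular $w^*$-monotone.

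Next I would prove $w^*$-maximality of the monotone operator. Suppose $(\bar{x}^*, \bar{x})$ with $\bar{x} \in X$ is $w^*$-monotonically related to the graph of $\partial g \cap X$, i.e. $\langle \bar{x} - x, \bar{x}^* - x^* \rangle \geq 0$ for every $x^* \in \mathrm{dom}(\partial g \cap X)$ and every $x \in \partial g(x^*) \cap X$; I must show $\bar{x} \in \partial g(\bar{x}^*)$. The natural device is a translation argument in the spirit of Phelps' Theorem~3.20: replace $g$ by the shifted function $h(x^*) = g(x^*) - \langle \bar{x}, x^* \rangle$ (well-defined and $w^*$-l.s.c. since $\bar{x} \in X$), which converts the claim $\bar{x} \in \partial g(\bar{x}^*)$ into the claim that $h$ attains its infimum at $\bar{x}^*$. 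Applying Lemma~\ref{L2} to $h$: if $\bar{x}^*$ were not a minimizer, there would exist $z^* \in \mathrm{dom}\, h$ and $z \in \partial h(z^*) \cap X$ with $h(z^*) < h(\bar{x}^*)$ and $\langle z, \bar{x}^* - z^* \rangle > 0$. Since $z \in \partial h(z^*) \cap X$ translates back to $z + \bar{x} \in \partial g(z^*) \cap X$, the monotone-relatedness hypothesis applied to the pair $(z^*, z + \bar{x})$ and $(\bar{x}^*, \bar{x})$ yields $\langle (z + \bar{x}) - \bar{x}, \bar{x}^* - z^* \rangle \geq 0$, that is $\langle z, \bar{x}^* - z^* \rangle \geq 0$, which is consistent; the contradiction I want comes instead from the minimization failure — I expect to derive a strict inequality from Lemma~\ref{L2} that forces $\langle z, \bar{x}^* - z^* \rangle \le 0$, contradicting the strict positivity above. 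This contradiction shows $\bar{x}^*$ minimizes $h$, whence $\bar{x} \in \partial g(\bar{x}^*) \cap X$, establishing $w^*$-maximal monotonicity.

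Finally, for $w^*$-maximal cyclic monotonicity I would invoke the remark recorded in the definition: a $w^*$-maximal monotone operator that is itself $w^*$-cyclically monotone is automatically $w^*$-maximal cyclically monotone. Since the first two parts show $\partial g \cap X$ is both $w^*$-maximal monotone and $w^*$-cyclically monotone, this last step is immediate and requires no further work.

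The main obstacle I anticipate is the maximality argument: the subtlety is that the subgradient and the test points must all be required to lie in $X$ (not merely in $X^{**}$), and the pairing $\langle x, x^* \rangle$ must be read as $x^*$ evaluated at $x \in X$. I must be careful that the translation $h(x^*) = g(x^*) - \langle \bar{x}, x^* \rangle$ keeps $h$ proper, convex and $w^*$-l.s.c. (which holds precisely because $\bar{x} \in X$ so that $x^* \mapsto \langle \bar{x}, x^* \rangle$ is $w^*$-continuous), and that the $z$ produced by Lemma~\ref{L2} genuinely lands in $X$ so that it can be fed into the $w^*$-monotone relatedness hypothesis, whose test vectors range only over $X$. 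Setting up this reduction correctly, and extracting the sign contradiction from $\langle z, \bar{x}^* - z^* \rangle > 0$ against the monotone relation, is the delicate point.
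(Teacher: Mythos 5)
Your overall strategy is exactly the paper's: verify $w^*$-cyclic (hence $w^*$-) monotonicity directly from the subdifferential inequality, reduce $w^*$-maximality to a minimization statement by tilting $g$ with the candidate subgradient (the paper works with $g-x$, you with $h=g-\langle\bar{x},\cdot\rangle$, the same device), apply Lemma~\ref{L2}, and finish with the remark that a $w^*$-maximal monotone operator which is $w^*$-cyclically monotone is automatically $w^*$-maximal cyclically monotone. However, your execution of the maximality step has a genuine gap, caused by a sign error: you apply the monotone-relatedness with the two difference vectors in opposite orders, and as a result you never actually derive the contradiction. By your own formulation of relatedness, $\langle \bar{x}-x,\bar{x}^*-x^*\rangle\geq 0$ for all $(x^*,x)$ in the graph, taking $x^*=z^*$ and $x=z+\bar{x}$ gives $\langle \bar{x}-(z+\bar{x}),\bar{x}^*-z^*\rangle\geq 0$, i.e. $\langle z,\bar{x}^*-z^*\rangle\leq 0$. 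You instead wrote $\langle (z+\bar{x})-\bar{x},\bar{x}^*-z^*\rangle\geq 0$ (first slot reversed), obtained $\langle z,\bar{x}^*-z^*\rangle\geq 0$, declared it ``consistent,'' and then deferred the contradiction to a hoped-for inequality $\langle z,\bar{x}^*-z^*\rangle\leq 0$ coming ``from the minimization failure.'' No such inequality is available from that source: Lemma~\ref{L2} produces exactly the opposite strict inequality $\langle z,\bar{x}^*-z^*\rangle> 0$ (which you already invoked), and the condition $h(z^*)<h(\bar{x}^*)$ by itself carries no sign information about $\langle z,\bar{x}^*-z^*\rangle$. So as written, the proof never closes at its crucial point.

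The repair is one line: the correctly oriented monotone relation gives $\langle z,\bar{x}^*-z^*\rangle\leq 0$, which contradicts Lemma~\ref{L2}'s $\langle z,\bar{x}^*-z^*\rangle>0$ outright, with no further input needed. This is precisely how the paper concludes: there one writes $z=y-x$ with $y\in\partial g(z^*)\cap X$, and the inequality $\langle y-x,z^*-x^*\rangle<0$ obtained from Lemma~\ref{L2} contradicts the $w^*$-monotonicity of the extension $T$, which demands $\langle y-x,z^*-x^*\rangle\geq 0$. Once the sign is fixed, your argument coincides with the paper's proof in all essentials.
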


\begin{proof}Since $g$ is a proper convex $w^*-l.s.c$-function on $X^*$, $\partial g$ is maximal monotone and maximal cyclically
monotone. Hence $\partial g\cap X$ is $w^*$-monotone and $w^*$-cyclically monotone. It suffices to show that $\partial g\cap X$ is $w^*$-maximal monotone. Suppose that $T: X^*\rightarrow 2^X$ is $w^*$-monotone satisfying that Gr $\partial g\cap X$ $\subset$ Gr $T$. Assume that there exists $(x^*,x)$ $\in$ Gr $T$ such that $(x^*,x)$ $\notin$ Gr $\partial g\cap X$. Therefore, it follows from $x\in \partial g(x^*)$ that $0\notin \partial (g-x)(x^*)$. Thus, $\inf_{X^*}(g-x)<(g-x)(x^*)$. By Lemma \ref{L2}, there exist $z^*$ $\in $ dom ($g-x$) $\equiv$ dom ($g$) and $z\in\partial(g-x)(z^*)\cap X$ such that $ \langle z,z^*-x^*\rangle<0$. Hence, there exists $y\in \partial g(z^*)\cap X$ such that $z=y-x$ and $\langle y-x,z^*-x^*\rangle<0$ which is a contradiction with the $w^*$-monotonicity of $T$. So we complete the proof.

\end{proof}

\begin{prop}\label{P8} Let $T: X^*\rightarrow 2^X$ be a $w^*$-cyclically
monotone operator and $x_0^*$ $\in$ $\rm{D}$ $(T)$. Let $h: X^*\rightarrow R\cup \{+\infty\}$ be defined for all $x^*\in X^*$ by
\begin{align}\label{E3} h(x^*)&=\sup\{\sum_{i=0}^{n-1}\langle x_i,x_{i+1}^*-x_i^*\rangle +\langle x_n,x^*-x_n^*\rangle \},\end{align}
where the above supremum is taken over all integers $n$, all $x_i^*\in X^*$ and all $x_i\in T(x_i^*)$. Then $h$ is a proper convex $w^*$-lower semicontinuous function such that \begin{align}\label{E4} \rm{Gr}\; T &\subset \rm{Gr}\;\partial h.\end{align}
\end{prop}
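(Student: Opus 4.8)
The plan is to verify the three required properties of $h$ in turn—convexity together with $w^*$-lower semicontinuity, then properness, then the subdifferential inclusion—because the first and the last are essentially formal consequences of the supremum structure, whereas properness is the only place where the $w^*$-cyclic monotonicity of $T$ is genuinely needed. Throughout, every admissible chain is understood to start from the fixed base point $x_0^*$, with $x_0\in T(x_0^*)$ and $x_i\in T(x_i^*)$ for $i\ge 1$.

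First I would record the decisive structural observation: for each fixed admissible chain $x_0,x_1^*,x_1,\ldots,x_n^*,x_n$, the term inside the supremum, viewed as a function of $x^*$, equals $\langle x_n,x^*\rangle+c$ for a constant $c$ depending only on the chain. Since $x_n\in X$, the functional $x^*\mapsto\langle x_n,x^*\rangle$ is $w^*$-continuous, so each term is an affine $w^*$-continuous function of $x^*$. As $h$ is the pointwise supremum of such functions, it is automatically convex and $w^*$-lower semicontinuous; this is precisely the gain obtained by working on the dual space with $T$ valued in the predual $X$.

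Next I would establish properness, which is the crux. Taking the trivial one-node chain $n=0$ with $x_0\in T(x_0^*)$ gives the lower bound $h(x^*)\ge\langle x_0,x^*-x_0^*\rangle>-\infty$ for every $x^*$, so $h$ never takes the value $-\infty$. It remains to exhibit a point where $h$ is finite, and the natural candidate is the base point $x_0^*$ itself. The same trivial chain shows $h(x_0^*)\ge 0$. For the reverse inequality $h(x_0^*)\le 0$ I would take an arbitrary chain entering the supremum at $x^*=x_0^*$, close it into a cycle $x_0^*,x_1^*,\ldots,x_n^*,x_0^*$, and apply the defining inequality of $w^*$-cyclic monotonicity to the $n+1$ points of this cycle; a one-line rearrangement shows that the chain sum is exactly the negative of the cyclic sum and is therefore $\le 0$. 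Hence $h(x_0^*)=0$, so $\mathrm{dom}\,h\neq\emptyset$ and $h$ is proper. I expect this step—correctly matching the index convention of the cyclic-monotonicity inequality to the telescoping chain sum—to be the main (indeed the only real) obstacle.

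Finally I would prove Gr $T\subset$ Gr $\partial h$ by the standard chain-extension argument. Fix $x^*\in D(T)$ and $x\in T(x^*)$; the goal is $h(y^*)\ge h(x^*)+\langle x,y^*-x^*\rangle$ for all $y^*$. Given any $\eta<h(x^*)$, choose an admissible chain whose sum at $x^*$ exceeds $\eta$, append the extra node $(x^*,x)$, and evaluate the lengthened chain at $y^*$: it is admissible for $h(y^*)$, and the appended node contributes exactly $\langle x,y^*-x^*\rangle$, so $h(y^*)>\eta+\langle x,y^*-x^*\rangle$. Letting $\eta\uparrow h(x^*)$ yields the subgradient inequality, whence $x\in\partial h(x^*)$. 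This step uses only the supremum definition of $h$, not monotonicity, so it is purely computational.
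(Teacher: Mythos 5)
Your proposal is correct and follows essentially the same route as the paper: convexity and $w^*$-lower semicontinuity as a supremum of $w^*$-continuous affine functions, properness by closing chains into cycles at $x_0^*$ and invoking $w^*$-cyclic monotonicity, and the inclusion $\mathrm{Gr}\,T \subset \mathrm{Gr}\,\partial h$ by the same chain-extension computation. The only cosmetic difference is that the paper phrases the last step through the Fenchel conjugate inequality $h^*(x)+h(x^*)\leq \langle x, x^*\rangle$ (via Propositions \ref{P1} and \ref{P3}), whereas you verify the subgradient inequality directly; these are interchangeable.
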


\begin{proof} Since $T: X^*\rightarrow 2^X$ is $w^*$ cyclically
monotone and $x_0^*$ $\in$ D $(T)$ and $x_0\in T(x_0^*)$, we have $h(x_0^*)<+\infty$ by letting $x^*=x_{n+1}^*=x_0^*$ and $x_{n+1}=x_0$.
Hence $h$ is proper. It is obvious that $h$ is convex and $w^*$-lower semicontinuous. Now we will prove that Gr $T$ $\subset$ Gr $\partial h$.
By Proposition \ref{P1} and Proposition \ref{P3}, it suffices to show for every $x^*\in X^*$ and $x\in T(x^*)$ that $h^*(x)+h(x^*)\leq x^*(x)$. By definition, for every $\lambda < h(x^*)$ there exist $x_i^*$ $\in$ D $T$ and $x_i\in T(x_i^*)$, $i=1,2,\cdots, n$ such that
$$\lambda<\sum_{i=0}^{n-1}\langle x_i,x_{i+1}^*-x_i^*\rangle +\langle x_n,x^*-x_n^*\rangle.$$
Let $x_{n+1}^*=x^*$ and $x_{n+1}=x $. For every $y^*\in X^*$,
$$h(y^*)\geq \sum_{i=0}^{n}\langle x_i,x_{i+1}^*-x_i^*\rangle +\langle x_{n+1},y^*-x_{n+1}^*\rangle$$
$$>\lambda+\langle x_{n+1},y^*-x_{n+1}^*\rangle$$
$$=\lambda+\langle x,y^*-x^*\rangle.$$
Hence
$$\langle x,y^*\rangle- h(y^*)\leq \langle x,x^*\rangle-h(x^*)$$
By the definition of $h^*$, we have
$$h^*(x)\leq \langle x,x^*\rangle-h(x^*)$$
So $x\in \partial h(x^*)$.
\end{proof}

By using the similar technique of Proposition \ref{P8} , we have the following Proposition \ref{P9}, combining with Proposition \ref{P7} which say $\partial
g\cap X$ is the unique $w^*$-maximal cyclically
monotone operator.
\begin{prop}\label{P9} If $\Phi: X^*\rightarrow 2^X$ is $w^*$-maximal cyclically
monotone, with $\rm{D}$ $(\Phi)$ $\neq$ $\emptyset$, then there exists a proper
convex $w^*-l.s.c$-function $g$ on $X^*$ such that $\Phi=\partial
g\cap X.$
\end{prop}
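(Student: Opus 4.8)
The plan is to construct the desired function $g$ explicitly from $\Phi$ using the Rockafellar-type formula of Proposition~\ref{P8}, and then invoke the maximality of $\Phi$ to upgrade the inclusion $\mathrm{Gr}\,\Phi \subset \mathrm{Gr}\,\partial g\cap X$ into an equality. Since $\Phi$ is $w^*$-maximal cyclically monotone it is in particular $w^*$-cyclically monotone, so I may apply Proposition~\ref{P8} directly: fixing any point $x_0^* \in \mathrm{D}(\Phi)$ (which is nonempty by hypothesis), I define $g = h$ by formula~(\ref{E3}). Proposition~\ref{P8} immediately guarantees that $g$ is a proper convex $w^*$-lower semicontinuous function on $X^*$ and that $\mathrm{Gr}\,\Phi \subset \mathrm{Gr}\,\partial g$.

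The next step is to observe that because $\Phi$ takes values in $2^X$, the inclusion $\mathrm{Gr}\,\Phi \subset \mathrm{Gr}\,\partial g$ actually lands inside $\mathrm{Gr}(\partial g\cap X)$: every $x \in \Phi(x^*)$ is by construction an element of $X$, and it satisfies $x \in \partial g(x^*)$, hence $x \in \partial g(x^*)\cap X$. Thus $\mathrm{Gr}\,\Phi \subset \mathrm{Gr}(\partial g\cap X)$. This is the point at which I can bring maximality to bear.

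The decisive step is to apply Proposition~\ref{P7} to this freshly built $g$: since $g$ is a proper convex $w^*$-l.s.c. function on $X^*$, the operator $\partial g\cap X$ is itself $w^*$-cyclically monotone (indeed $w^*$-maximal cyclically monotone). Now I have two operators, $\Phi$ and $\partial g\cap X$, with $\Phi$ assumed $w^*$-\emph{maximal} cyclically monotone, $\partial g\cap X$ a $w^*$-cyclically monotone operator, and $\mathrm{Gr}\,\Phi \subset \mathrm{Gr}(\partial g\cap X)$. By the very definition of $w^*$-maximal cyclic monotonicity, any $w^*$-cyclically monotone operator whose graph contains $\mathrm{Gr}\,\Phi$ must coincide with $\Phi$. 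Therefore $\Phi = \partial g\cap X$, which is exactly the claim.

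\emph{The main obstacle} I anticipate is not any single deep estimate — the heavy lifting has already been done in Propositions~\ref{P8} and~\ref{P7} — but rather making sure the two notions of $w^*$-(maximal) cyclic monotonicity line up correctly so that the maximality definition applies verbatim. One must check that $\partial g\cap X$ genuinely qualifies as the comparison operator $\Psi$ in the definition of $w^*$-maximal cyclic monotonicity, i.e. that it is $w^*$-cyclically monotone (supplied by Proposition~\ref{P7}) and that its graph contains $\mathrm{Gr}\,\Phi$ (the inclusion from Proposition~\ref{P8}, refined to intersect with $X$). Provided these two hypotheses are verified, the equality is forced, and no further construction or limiting argument is needed.
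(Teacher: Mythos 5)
Your proposal is correct and follows exactly the route the paper intends: the paper gives no detailed proof, only the remark that Proposition~\ref{P9} follows "by the similar technique of Proposition~\ref{P8}, combining with Proposition~\ref{P7}," and your argument — build $g=h$ via formula~(\ref{E3}), note that the graph inclusion from Proposition~\ref{P8} lands in $\mathrm{Gr}(\partial g\cap X)$ since $\Phi$ is $X$-valued, then invoke the $w^*$-cyclic monotonicity of $\partial g\cap X$ (Proposition~\ref{P7}) together with the maximality of $\Phi$ to force equality — is precisely that sketch, carried out carefully. No gap; this is the same proof, just written out.
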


\begin{thm}\label{T2} Let $g$ be a proper convex $w^*$-lower semicontinuous function
on $X^*$ and let $T: X^*\rightarrow 2^X$ be a set valued operator such that \begin{align}\label{E5} \rm{Gr}\; T &\subset \rm{Gr}\; \partial g.\end{align}
Let $x_0^*$ $\in$ $\rm{D}$ $(T)$. Denote by $h$ the proper convex $w^*$-lower semicontinuous function defined above. Then

(i) If $\rm{int} $ $\rm{(dom}$ $g$$)$ $\neq$ $\emptyset$ and $\rm{D}$ $(T)$ is dense in $\rm{int\; (dom}$ $g$$)$, then for all $x^*$ $\in$ $\rm{dom}$ $g$,
 $$g(x^*)-g(x_0^*)=h(x^*).$$

(ii) If $\rm{int\; (dom}$ $g^*|_X$$)$ $\neq$ $\emptyset$ and $\rm{Im}$ $T$ is dense in $\rm{int\; (dom}$ $g^*|_X$$)$, then for all $x^*\in X^*$,
 $$g(x^*)-g(x_0^*)=h(x^*).$$
\end{thm}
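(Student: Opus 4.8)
The plan is to show that $h$ coincides with $\tilde g := g - g(x_0^*)$ on the relevant set, since the stated identity is exactly $h(x^*) = g(x^*) - g(x_0^*)$. Both $h$ and $\tilde g$ are proper convex $w^*$-lower semicontinuous. First I would dispose of the easy inequality $h \le \tilde g$: for any admissible chain $x_0^*, x_1^*, \dots, x_n^*$ with $x_i \in T(x_i^*) \subseteq \partial g(x_i^*) \cap X$ and target $x^*$, the subgradient inequality gives $\langle x_i, x_{i+1}^* - x_i^*\rangle \le g(x_{i+1}^*) - g(x_i^*)$ and $\langle x_n, x^* - x_n^*\rangle \le g(x^*) - g(x_n^*)$; summing telescopes to $g(x^*) - g(x_0^*)$, so $h(x^*) \le \tilde g(x^*)$ for every $x^*$. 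Taking $n=0$ with target $x_0^*$ also gives $h(x_0^*) = 0 = \tilde g(x_0^*)$, so the two functions agree at the base point.

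The whole difficulty is the reverse inequality $h \ge \tilde g$. My main structural tool is Proposition \ref{P8}, which already guarantees $\mathrm{Gr}\, T \subseteq \mathrm{Gr}\,\partial h$, that is $T \subseteq \partial h \cap X$; combined with the hypothesis $T \subseteq \partial g \cap X$, I would try to upgrade these to the equality $\partial h \cap X = \partial g \cap X$. By Proposition \ref{P7} applied to $g$ and to $h$, both $\partial g\cap X$ and $\partial h \cap X$ are $w^*$-maximal monotone (and $w^*$-maximal cyclically monotone), so it suffices to prove the single inclusion $\partial g \cap X \subseteq \partial h \cap X$: maximality of $\partial g \cap X$ then forces equality. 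Once the subdifferentials coincide, $\bar h := h + g(x_0^*)$ and $g$ are convex $w^*$-lower semicontinuous with the same subdifferential and hence differ by a constant; evaluating at $x_0^*$, where $\bar h(x_0^*) = g(x_0^*)$, fixes the constant to be $0$ and yields $h = \tilde g$.

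The density hypotheses are exactly what drives the inclusion $\partial g \cap X \subseteq \partial h \cap X$, and here the two cases diverge. In case (i), where $D(T)$ is dense in $\mathrm{int}(\mathrm{dom}\, g)$, I would fix $(x^*, x) \in \partial g \cap X$ with $x^* \in U := \mathrm{int}(\mathrm{dom}\, g)$, use the dual Borwein theorem (Theorem \ref{T1}) together with Proposition \ref{P6} to produce exact subgradient pairs arbitrarily close to $(x^*, x)$, and use the density of $D(T)$ to realize these approximants by pairs of $T$; feeding them as the final links of chains shows $x \in \partial h(x^*)$, so the identity $h = \tilde g$ holds throughout $U$. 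Since $g$ and $h$ are continuous on $U$ (each is proper convex $w^*$-l.s.c. and finite on the open set $U$, hence continuous there) and $U$ is dense in $\mathrm{dom}\, g$, a standard convex lower-semicontinuity/closure argument via $\phi(x^*) = \lim_{t\uparrow 1}\phi(x_0^* + t(x^*-x_0^*))$ extends the identity to every $x^* \in \mathrm{dom}\, g$. In case (ii) I would pass to the conjugate $f := g^*|_X$, so that $g = f^*$ by Corollary \ref{Cor1}; the density of $\mathrm{Im}\, T$ in $\mathrm{int}(\mathrm{dom}\, f)$ gives, via the same mechanism read on the $X$-side, the coincidence of $h^*|_X$ with $f$ up to the additive constant $g(x_0^*)$ on $\mathrm{int}(\mathrm{dom}\, f)$ and hence, by lower semicontinuity, on all of $X$; biconjugating through Corollary \ref{Cor1} then returns $h = \tilde g$ on the whole of $X^*$, including the points where both sides equal $+\infty$.

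I expect the principal obstacle to be precisely the upgrade step $T \subseteq \partial h \cap X \Rightarrow \partial g \cap X \subseteq \partial h \cap X$: this is where the mere inclusion of $T$ in both subdifferentials must be promoted to equality, which is not a pointwise matter but requires converting the density of $D(T)$ (resp. $\mathrm{Im}\, T$) into approximation of an arbitrary subgradient pair by chains, exactly the role of Theorem \ref{T1}. A secondary but genuine technical point is the passage from the interior of the domain to its boundary in case (i), and the conjugate bookkeeping, in particular tracking the constant $g(x_0^*)$ through the biconjugation, in case (ii).
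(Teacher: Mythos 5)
Your overall architecture---the telescoping inequality $h\le g-g(x_0^*)$, the normalization $h(x_0^*)=0$, and the plan to establish equality of subdifferentials and then invoke a Rockafellar-type ``equal subdifferentials $\Rightarrow$ equal up to a constant'' step---does parallel the skeleton of the paper's proof, and your maximality reduction via Proposition \ref{P7} is sound as far as it goes. However, your central step, the inclusion $\partial g\cap X\subseteq\partial h\cap X$, has a genuine gap. You propose to fix $(x^*,x)\in\mathrm{Gr}(\partial g\cap X)$, approximate it by exact subgradient pairs using Theorem \ref{T1} and Proposition \ref{P6}, and then ``use the density of $D(T)$ to realize these approximants by pairs of $T$.'' Density of $D(T)$ is density of a set of \emph{points} in $X^*$; it gives no control on the \emph{values} of $T$ at those points, so it does not yield density of $\mathrm{Gr}\,T$ in $\mathrm{Gr}(\partial g\cap X)$, and in fact graph-density fails in general. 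Take $X=X^*=\R^2$, $g(u,v)=|u|$, and let $T$ be the gradient of $g$ restricted to the dense open set $\{u\neq0\}$, so $T(u,v)=(\mathrm{sign}(u),0)$; then $\mathrm{Gr}\,T\subseteq\mathrm{Gr}\,\partial g$ and $D(T)$ is dense, but the pair $\bigl((0,0),(0,0)\bigr)$ lies in $\mathrm{Gr}\,\partial g$ while every pair in $\mathrm{Gr}\,T$ has second coordinate at distance $1$ from the origin. So no chain whose final link is a $T$-pair close to $(x^*,x)$ exists, even though the conclusion of the theorem (and the inclusion you want) is true in this example. What your mechanism misses is that the multi-valued part of $\partial g$ must be recovered by \emph{convexifying limits} of nearby $T$-values (in the example, $(0,0)$ is the midpoint of the two cluster values $(\pm1,0)$). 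This is exactly what the paper's tool supplies: on $U=\mathrm{int\,(dom}\;g)$ both $\partial g$ and $\partial h$ are minimal $w^*$-cuscos ([\cite{Ben}, Propositions 2.4 and 2.8], via local boundedness), and [\cite{Ben}, Proposition 2.9] states that two minimal $w^*$-cuscos containing a common operator with dense domain coincide; then \cite{Roc} gives $g=h+r$ on $U$. Theorem \ref{T1} cannot substitute for this cusco argument: it approximates within $\mathrm{Gr}(\partial g\cap X)$ but never connects to $\mathrm{Gr}\,T$.

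The same gap recurs in your treatment of case (ii) after passing to conjugates; there the paper also needs [\cite{Phe}, Theorem 2.28] and [\cite{Ben}, Lemma 2.6] to get local boundedness of $T^{-1}$ and the inclusion $V\subseteq\mathrm{int\,(dom}\;h^*|_X)$ before the cusco argument can run, steps your sketch glosses over. Two smaller points: your ``same subdifferential $\Rightarrow$ differ by a constant'' claim is stated for the operators $\partial g\cap X$ and $\partial h\cap X$, whereas Rockafellar's theorem concerns full subdifferentials; it is true here, but needs the conjugation identity $\partial g\cap X=(\partial(g^*|_X))^{-1}$ (via Corollary \ref{Cor1}) to reduce to Rockafellar on $X$. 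And in case (i), your radial-limit extension from $U$ to $\mathrm{dom}\,g$ takes limits along segments from $x_0^*$, but $x_0^*\in D(T)$ need not lie in $U$; the segment should emanate from a point of $U$ (easily repaired, and the paper's appeal to $w^*$-lower semicontinuity hides the same standard argument).
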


\begin{proof}
(i) By (\ref{E1}), (\ref{E3}) and (\ref{E5}), it is clear that for all $x^*\in X^*$,
\begin{align}\label{E6} g(x^*)-g(x_0^*)&\geq h(x^*).\end{align}
Let $U$ $=$ int (dom $g$) $\neq$ $\emptyset$. By (\ref{E6}), we have $U$ $\subset$ dom $h$. Since $U$ is open, it follows from[3, Proposition 2.5] that $U$ $\subset$ int (D $(\partial g)$) $\cap$ int $($D $(\partial h)$$)$. Hence, by [\cite{Ben}, Proposition 2.4,Proposition 2.8], the maximal monotone operators $\partial g$ and $\partial h$ are minimal $w^*$-cuscos on $U$. By (\ref{E5}), we have
$$\rm{Gr}\; T\subset \rm{Gr}\; \partial g,$$
while by Proposition \ref{P8} we have
$$\rm{Gr}\; T\subset \rm{Gr}\; \partial h.$$
Since D $(T)$ is dense in $U$, Proposition 2.9 in \cite{Ben} yields that $\partial g=\partial h$ on $U$. Hence by \cite{Roc} there exists $r\in \R$ such that $g = h + r$ on $U$. By the $w^*$ lower semicontinuity of $g$ and $h$, the equality $g = h + r$ holds on $\rm {dom}\;g$. By the definition of $h$ and noting that the operator $T$ is $w^*$-cyclically monotone we have $h(x_0^*)=0$, hence we conclude that $g(x_0) = r$ and thus equality holds.

(ii) Let $V$ $=$ int (dom $g^*|_X$) $\neq$ $\emptyset$. By [\cite{Phe}, Theorem 2.28], $\partial (g^*|_X)$ is locally bounded on $V$. Obviously,
$$\rm{Gr}\;\partial g^{-1}\subset\rm{Gr}\;\partial (g^*|_X).$$
By (\ref{E5}), we have
$$\rm{Gr}\;T^{-1}\subset\rm{Gr}\; \partial (g^*|_X),$$
which yields that $T^{-1}$ is locally bounded on $V$. By [\cite{Ben}, Lemma 2.6], we have
$$V\subset\rm{int\; (dom}\;h^*|_X).$$
By Proposition \ref{P8}, we have $\rm{Gr}$ $T$ $\subset$ $\rm{Gr}$ $\partial h$, which implies that
$$\rm{Gr}\;T^{-1}\subset\rm{Gr}\;\partial (h^*|_X).$$
Since $\rm{D}$ $(T^{-1})$ $= $ $\rm{Im}$ $T$ is dense in $V$ and since both $\partial g^*|_X$ and $\partial h^*|_X$ are minimal $w^*$-cuscos on $V$. It follows that $\partial g^*=\partial h^*$ on $V$. Hence, there exists $r\in\R$ such that
$$g^*=h^*+r$$
on $V$. The above equality can be extended to $X$ since
$$\rm{int\; (dom}\;g^*|_X)=\rm{int\; (dom}\;h^*|_X).$$
Now we will prove this last equality. Taking conjugates in both sides of the inequality in (\ref{E6}) we have
$$g^*+g(x_0^*)\leq h^*.$$
Hence, $\rm{dom}$ ($h^*|_X$) $\subset$ $\rm{dom}$ $(g^*|_X)$ and so $\rm{int\; dom}$ $(h^*|_X)$ $\subset$ $\rm{int\; dom}$ $(g^*|_X)$. Therefore, we conclude the equality holds. It follows that
$$g^*=h^*+r$$
holds on $X$. Taking conjugates we obtain $g=h-r$ on $X^*$. Since $h(x_0^*)=0$, we conclude that $g(x_0^*)=-r$. Thus $g-g(x_0^*)=h$ and we complete the proof.

\end{proof}

\begin{thm}\label{T3} Let $g$ be a proper convex $w^*$-lower semicontinuous function
on $X^*$ such that $\rm{int\;( dom}$ $g)$ $\neq$ $\emptyset$ (resp. $\rm{int\; (dom}$ $(g^*|_X))$ $\neq$ $\emptyset$). Then given any point $x_0^*$ $\in$ $\rm{D}$ ($\partial g\cap X$)
and $x^*$ $\in$ $\rm{dom}$ $g$ (resp. $x^*\in X^*$), we have
$$g(x^*)=g(x_0^*)+\sup\{\sum_{i=0}^{n-1}\langle x_i,x_{i+1}^*-x_i^*\rangle +\langle x_n,x^*-x_n^*\rangle \},$$
where the above supremum is taken over all integers $n$, all $x_i^*\in X^*$ and all $x_i\in\partial g(x_i^*)\cap X$ for $i=0,1,\cdots,n$.
\end{thm}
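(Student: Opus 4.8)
The plan is to derive this statement as a direct specialization of Theorem~\ref{T2}, applied to the operator $T = \partial g \cap X$. By Proposition~\ref{P7} this $T$ is $w^*$-cyclically monotone, so the function $h$ of Proposition~\ref{P8} is well defined, and with $T(x_i^*) = \partial g(x_i^*)\cap X$ it coincides, by inspection, with the supremum displayed in the statement. Moreover $\mathrm{Gr}\,T = \mathrm{Gr}\,(\partial g\cap X)\subset \mathrm{Gr}\,\partial g$ holds automatically, so hypothesis~(\ref{E5}) is satisfied, and the base-point requirement $x_0^*\in \mathrm{D}(T)$ is precisely the assumption $x_0^*\in \mathrm{D}(\partial g\cap X)$. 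Consequently the identity $g(x^*)=g(x_0^*)+h(x^*)$ furnished by Theorem~\ref{T2}, together with the normalization $h(x_0^*)=0$ that is immediate from the cyclic monotonicity of $T$, is literally the claimed representation. Thus the whole task reduces to verifying, for $T=\partial g\cap X$, the two density hypotheses distinguishing cases~(i) and~(ii) of Theorem~\ref{T2}.

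For the first case, where $\mathrm{int}\,(\mathrm{dom}\,g)\neq\emptyset$, I would set $U=\mathrm{int}\,(\mathrm{dom}\,g)$ and invoke the Corollary following Theorem~\ref{T0}, applied to the restriction of $g$ to $U$: the set $\{x^*\in U:\partial g(x^*)\cap X\neq\emptyset\}$ is dense in $U$. Since this set is contained in $\mathrm{D}(\partial g\cap X)=\mathrm{D}(T)$, it follows a fortiori that $\mathrm{D}(T)$ is dense in $U$, which is exactly the hypothesis of Theorem~\ref{T2}(i). That theorem then yields $g(x^*)-g(x_0^*)=h(x^*)$ for every $x^*\in \mathrm{dom}\,g$, as required.

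For the second case, where $\mathrm{int}\,(\mathrm{dom}\,(g^*|_X))\neq\emptyset$, the hypothesis of Theorem~\ref{T2}(ii) demands that $\mathrm{Im}\,T=\mathrm{Im}\,(\partial g\cap X)$ be dense in $\mathrm{int}\,(\mathrm{dom}\,(g^*|_X))$. Here I would first translate the image into a subdifferential domain on $X$. Writing $f=g^*|_X$, Corollary~\ref{Cor1} gives $g=f^*$ and Proposition~\ref{P2} gives $f^{**}|_X=f$; combining these with the conjugacy characterization of Proposition~\ref{P3}(ii), one checks that for $x\in X$ and $x^*\in X^*$ the equivalence $x\in\partial g(x^*)\cap X \iff x^*\in\partial f(x)$ holds. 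Hence $\mathrm{Im}\,(\partial g\cap X)=\mathrm{D}(\partial f)$. Since $f$ is proper, convex and lower semicontinuous, it is continuous, and therefore subdifferentiable, at every point of $\mathrm{int}\,(\mathrm{dom}\,f)$, so $\mathrm{int}\,(\mathrm{dom}\,f)\subset\mathrm{D}(\partial f)$ and the required density is immediate. Theorem~\ref{T2}(ii) then gives $g(x^*)-g(x_0^*)=h(x^*)$ for all $x^*\in X^*$.

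I expect the only genuinely delicate point to be the duality identification $\mathrm{Im}\,(\partial g\cap X)=\mathrm{D}(\partial(g^*|_X))$ in the second case: it requires keeping track of the embedding $X\hookrightarrow X^{**}$ and of the biconjugate identity $f^{**}|_X=f$, so that the condition ``$x\in\partial g(x^*)$ with $x\in X$'' is correctly matched to ``$x^*\in\partial f(x)$''. Once this equivalence is in place the density is trivial, since the domain of $\partial f$ contains the interior of $\mathrm{dom}\,f$, and the remainder of the argument is a clean bookkeeping reduction to Theorem~\ref{T2}. Everything else, namely the properness and $w^*$-lower semicontinuity of $h$ and the normalization $h(x_0^*)=0$, is already supplied by Proposition~\ref{P8} and the $w^*$-cyclic monotonicity of $\partial g\cap X$ recorded in Proposition~\ref{P7}.
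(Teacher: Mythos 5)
Your proof is correct, and its skeleton is exactly the paper's: set $T=\partial g\cap X$, note $\mathrm{Gr}\,T\subset\mathrm{Gr}\,\partial g$ and the $w^*$-cyclic monotonicity, and reduce everything to the two density hypotheses of Theorem~\ref{T2}. The difference lies in how those hypotheses are verified. The paper obtains both at one stroke from Theorem~\ref{T1} (the dual Borwein theorem), via Proposition~\ref{P6} and the estimates $\|x_\eps^*-x_0^*\|\leq\sqrt{\eps}$, $\|x_\eps-x_0\|\leq\sqrt{\eps}$: the first estimate gives density of $\mathrm{D}(T)$ in $\mathrm{dom}\,g$, the second gives density of $\mathrm{Im}\,T$ in $\mathrm{int}\,(\mathrm{dom}\,g^*|_X)$. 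Your case (i) is essentially the same chain, merely routed through the Corollary after Theorem~\ref{T0} (which is itself deduced from Proposition~\ref{P6} and Theorem~\ref{T0}); the one point you should state explicitly is that for $x^*$ in the open set $U$ the subdifferential of $g|_U$ coincides with that of $g$ (a local subgradient of a convex function at an interior point is a global one), so the Corollary really does yield density of $\mathrm{D}(T)$ in $U$. Your case (ii), however, is genuinely different and cleaner: using Corollary~\ref{Cor1}, Proposition~\ref{P2} and Corollary~\ref{Cor2} (with $\eps=0$) to identify $\mathrm{Im}\,(\partial g\cap X)=\mathrm{D}(\partial(g^*|_X))$, and then the standard fact that a proper convex l.s.c.\ function on a Banach space is continuous, hence subdifferentiable, on the interior of its domain, you get the containment $\mathrm{int}\,(\mathrm{dom}\,g^*|_X)\subset\mathrm{Im}\,T$, which is stronger than density and needs no Bronsted--Rockafellar-type approximation at all. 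What the paper's route buys is uniformity (one theorem serves both cases); what yours buys in case (ii) is elementarity and a sharper conclusion. The only housekeeping items to add are the properness of $f=g^*|_X$ (immediate here, since $g$ is proper and $\mathrm{dom}\,(g^*|_X)$ has nonempty interior) and the local-to-global subgradient remark in case (i); with those sentences included, your argument is complete.
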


\begin{proof}
 Let $T=\partial g\cap X$. By Theorem \ref{T1}, we obtain that D ($T$) is dense in $\rm{ int\; (dom}$ $g)$ (resp. $\rm{Im}$ $T$ is dense in $\rm{int\; (dom}$ $g^*|_X))$. Now the result follws from Theorem \ref{T2}, immediately.

\end{proof}

\begin{thm}Suppose that Banach space $X^*$ has the Radon-Nikodym property and the
convex function $g$ is $w^*$-epi-pointed. Let $x_0^*$ $\in$ $\rm{D}$ ($\partial g\cap X$) Then for every $x^*\in X^*$ we have
$$g(x^*)-g(x_0^*)=\sup\{\sum_{i=0}^{n-1}\langle x_i,x_{i+1}^*-x_i^*\rangle +\langle x_n,x^*-x_n^*\rangle\},$$
where the above supremum is taken over all integers $n$, all $x_1^*,x_2^*\cdots, x_n^*\in w^*-exp\;g$ and all $x_i\in\partial g(x_i^*)\cap X$ for $i=0,1,\cdots,n$.
\end{thm}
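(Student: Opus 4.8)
The plan is to deduce this final theorem from the already-established Theorem \ref{T3} (the non-exposed version) by showing that restricting the functionals $x_1^*,\dots,x_n^*$ to lie in $w^*$-$\mathrm{exp}\,g$ does not decrease the supremum. Since the supremum in the statement is taken over a \emph{smaller} collection (only $w^*$-strongly exposed points are allowed for the intermediate functionals), it is automatically $\leq$ the supremum in Theorem \ref{T3}, which equals $g(x^*)-g(x_0^*)$. So the entire content is the reverse inequality: every finite chain using arbitrary $x_i^*\in X^*$ with $x_i\in\partial g(x_i^*)\cap X$ can be approximated, up to arbitrarily small error, by a chain whose intermediate functionals are $w^*$-strongly exposed points of $g$.

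First I would fix a chain $x_0^*,x_1^*,\dots,x_n^*$ with $x_i\in\partial g(x_i^*)\cap X$ achieving a value within $\eta$ of the full supremum. The key tool is Proposition \ref{P5}: since $X^*$ has the Radon-Nikodym property and $g$ is $w^*$-epi-pointed, the set $w^*$-$\mathrm{Exp}\,g$ of $w^*$-strongly exposing functionals is dense in $\mathrm{int}\,(\mathrm{dom}\,g^*|_X)$. Recall from the definitions that $w^*$-$\mathrm{Exp}\,(g,y^*)\subset\partial g(y^*)\cap X$, and that $y^*\in w^*$-$\mathrm{exp}\,g$ is exactly a point admitting such a $w^*$-strongly exposing functional. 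Next I would replace each intermediate slope $x_i$ (for $1\le i\le n-1$) by a nearby element $\tilde x_i\in w^*$-$\mathrm{Exp}\,g$, using the density from Proposition \ref{P5}; by the definition of $w^*$-$\mathrm{Exp}\,g$ as a union over $w^*$-$\mathrm{exp}\,g$, each such $\tilde x_i$ is a $w^*$-strongly exposing functional for some $w^*$-strongly exposed point $\tilde x_i^*$ with $\tilde x_i\in\partial g(\tilde x_i^*)\cap X$. Then I would estimate the change in the telescoping sum $\sum_{i=0}^{n-1}\langle x_i,x_{i+1}^*-x_i^*\rangle+\langle x_n,x^*-x_n^*\rangle$ caused by replacing $(x_i,x_i^*)$ by $(\tilde x_i,\tilde x_i^*)$, controlling each term by the smallness of $\|\tilde x_i-x_i\|$ and $\|\tilde x_i^*-x_i^*\|$ together with the local boundedness / continuity that is available on the interior of the domain. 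Letting the approximation errors tend to $0$ yields that the restricted supremum is $\geq g(x^*)-g(x_0^*)-\eta$, and since $\eta$ is arbitrary the two suprema coincide.

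The main obstacle I expect is the bookkeeping in this replacement step, specifically ensuring that each approximating pair $(\tilde x_i,\tilde x_i^*)$ is simultaneously a genuine subgradient pair with $\tilde x_i^*\in w^*$-$\mathrm{exp}\,g$ \emph{and} close enough to the original pair in both coordinates to keep the sum within tolerance. Density from Proposition \ref{P5} gives closeness of the \emph{slopes} $\tilde x_i$ to $x_i$ in $X$, but one must also produce a corresponding $\tilde x_i^*$ close to $x_i^*$: this requires a continuity property of the (inverse) subdifferential, which is exactly where the local boundedness of $\partial(g^*|_X)$ on $V=\mathrm{int}\,(\mathrm{dom}\,g^*|_X)$ used in the proof of Theorem \ref{T2}(ii), combined with the fact that $\tilde x_i\in w^*$-$\mathrm{Exp}\,g$ strongly exposes a unique point, comes in. I would handle the boundary/endpoint terms $x_0^*$ and $x^*$ separately, since these are fixed and not subject to replacement, so only the $n-1$ intermediate nodes need approximation, which keeps the accumulated error a finite sum of controllably small quantities. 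Once this approximation is justified, the conclusion follows immediately by comparison with Theorem \ref{T3}.
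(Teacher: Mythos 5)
Your reduction to Theorem~\ref{T3} rests on a node-wise approximation step that fails. You need, for each intermediate pair $(x_i,x_i^*)$, a replacement $(\tilde x_i,\tilde x_i^*)$ with $\tilde x_i\in w^*\text{-}\mathrm{Exp}\,g$, $\tilde x_i^*\in w^*\text{-}\mathrm{exp}\,g$, and \emph{both} coordinates close to the originals. Proposition~\ref{P5} only gives density of the slopes $\tilde x_i$ in $\mathrm{int}\,(\mathrm{dom}\,g^*|_X)$; it gives no control on the exposed points those slopes determine, and no such control is available: local boundedness of $\partial(g^*|_X)$ on $V$ makes the candidate functionals bounded, not close to $x_i^*$, and the map ``slope $\mapsto$ exposed point'' is genuinely discontinuous. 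Concretely, take $X=X^*=\R$ and $g(t)=\max(-t,\,t-2)$, so $g^*|_X(s)=s+1$ on $\mathrm{dom}\,(g^*|_X)=[-1,1]$, $g$ is $w^*$-epi-pointed, and $\R$ has the Radon--Nikodym property. The only $w^*$-strongly exposed point of $g$ is $t^*=1$ (every slope $s\in(-1,1)$ exposes it). Now take the node $x_1^*=0$ with slope $x_1=-1\in\partial g(0)$: there is \emph{no} exposed point anywhere near $0$, so the replacement step is not merely hard to estimate, it is impossible. Yet the theorem's conclusion does hold in this example (a one-link chain through the node $1$ with slope $x_1\in[-1,1]$ already yields $\sup = -1+|x^*-1| = g(x^*)-g(0)$), which shows the equality of the two suprema is a global fact about the supremum, not a consequence of chain-by-chain approximation. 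Any repair of your argument along these lines would in effect have to re-prove the duality mechanism of Theorem~\ref{T2}(ii).

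The paper's route avoids approximation entirely: define $T(x^*)=\partial g(x^*)\cap X$ for $x^*\in\{x_0^*\}\cup w^*\text{-}\mathrm{exp}\,g$ and $T(x^*)=\emptyset$ otherwise. Then $\mathrm{Gr}\,T\subset\mathrm{Gr}\,\partial g$, so $T$ is $w^*$-cyclically monotone, and $\mathrm{Im}\,T\supset w^*\text{-}\mathrm{Exp}\,g$ is dense in $\mathrm{int}\,(\mathrm{dom}\,g^*|_X)$ by Proposition~\ref{P5}. Density of the \emph{image} (the slopes) is exactly the hypothesis of Theorem~\ref{T2}(ii) --- this is the point your proposal misses: part (ii) was set up precisely so that no closeness of the functionals $x_i^*$ is ever needed. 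Theorem~\ref{T2}(ii) then gives $g(x^*)-g(x_0^*)=h(x^*)$, where $h$ is the supremum over chains with nodes in $\mathrm{D}(T)$; a short cyclic-monotonicity argument discards the prefix of any chain up to its last visit to $x_0^*$ (that prefix contributes $\leq 0$), so the supremum may be taken over chains whose intermediate nodes lie in $w^*\text{-}\mathrm{exp}\,g$, as the statement requires. If you want to keep the spirit of deducing the result from earlier theorems, apply Theorem~\ref{T2}(ii) to this truncated operator $T$ rather than trying to perturb chains coming from Theorem~\ref{T3}.
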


\begin{proof} Let $T: X^*\rightarrow 2^X$ be defined for all $x^*\in X^*$ by

$$T(x^*)=
\begin{cases}
\partial g(x^*)\cap X,& \text{$x^*\in\{x_0^*\}\cup w^*-\rm{exp}g,$}\\
\emptyset,& \text{$x^*\notin\{x_0^*\}\cup w^*-\rm{exp}g.$}
\end{cases}$$

Hence, Gr $T$ $\subset$ Gr $\partial g$ and $T$ is $w^*$-Cyclically monotone.
For a sequence $x_1^*,x_2^*\cdots, x_n^*\in$ D($T$), denote by $n_0$ the smaller index in $\{0,1,\cdots,n\}$ such that $x_i^*\neq x_0^*$ for all $i>n_0$. Therefore, $x_{n_0}^*=x_0^*$. It follows from the $w^*$-Cyclically monotonicity of $T$ that
$$\sum_{i=0}^{n_0-1}\langle x_i, x_{i+1}^*-x_i^*\rangle\leq0.$$
Hence, we may take the supremum among the sequence $x_{n_0+1}^*,x_{n_0+2}^*\cdots, x_n^*\in$ $w^*$-exp $g$ instead of the sequence $x_1^*,x_2^*\cdots, x_n^*$.
Since
$$w^*-\rm{Exp}\;g\subset\bigcup_{x^*\in w^*-\rm{exp}\; g}\partial g(x^*)\cap X\subset \rm{Im} T,$$
it follows from Proposition \ref{P5} that $\rm{Im}$ $T$ is dense in $\rm {int\; (dom}$ $g^*|_X)$ and the result follows from Theorem \ref{T2} or Theorem \ref{T3}.
\end{proof}

\section{Acknowledgements}

This work was partially done while the author was visiting Texas A$\&$M University and in Analysis and Probability Workshop at Texas A$\&$M University which was funded by NSF Grant. The author would like to thank Professor W.B. Johnson and Professor Th. Schlumprecht for the invitation.

\end{document}